\newcommand{\prob}{\mathbb{P}}
\newcommand{\bern}{\operatorname{Bern}}
\newcommand{\ind}{{\bf 1}}
\newcommand{\on}[1]{{\operatorname{#1}}}
\newcommand{\match}{\mathcal{M}}
\newcommand{\keywords}[1]{\par\addvspace\baselineskip\noindent
  \textbf{Keywords:}\enspace\ignorespaces#1}
\newtheorem{thm}{Theorem}
\newtheorem{lem}{Lemma}
\begin{document}

\title{Reducing the Ising model to matchings}

\maketitle

{\bfseries \sffamily Mark L. Huber} \par
{\slshape Department of Mathematics and Computer Science, 
           Claremont McKenna College} \par
{\slshape mhuber@cmc.edu}
\vskip 1em

{\bfseries \sffamily Jenny Law} \par
{\slshape Department of Mathematics, Duke University} \par
{\slshape waijlaw@math.duke.edu}

\vskip 4em

\begin{abstract}
Canonical paths is one of the most powerful tools available
to show that a Markov chain is rapidly mixing, thereby enabling approximate
sampling from complex high dimensional distributions.  Two success stories for
the canonical paths method are chains for drawing matchings in a graph, and 
a chain for a version of the Ising model called the subgraphs world.  
In this paper, it is shown that a subgraphs world draw can be obtained
by taking a draw from matchings on a graph that is linear in the size
of the original graph.  This provides a partial answer to why canonical paths
works so well for both problems, 
as well as providing a new source of algorithms
for the Ising model. For instance, 
this new reduction immediately yields a fully polynomial
time approximation scheme for the Ising model on a bounded degree graph
when the magnitization is bounded away from 0.
\keywords{Monte Carlo, simulation reduction, canonical paths, fpras}
\end{abstract}

\section{Introduction}  

The Markov chain Monte Carlo (MCMC) approach remains the most widely used 
methodology for generating random variates from high dimensional 
distributions.
Let $\pi$ be a distribution on a finite state space $\Omega$.  
A Markov chain is a stochastic process $\{X_1,X_2,\ldots\}$ on $\Omega$
so that $\prob(X_{t+1} \in A|X_1,X_2,\ldots,X_t) = \prob(X_{t+1} \in A|X_t).$

Call a chain on a finite state space 
{\em ergodic} if there exists an $N$ such that for all
$n \geq N$ there is positive probability of traveling from any state $x$
to any other state $y$ in $n$ steps.  For ergodic chains, 
the limiting distribution of $X_t$
will equal the stationary distribution of the chain.  Using well
known methodologies, it is straightforward to build Markov chains whose
stationary distribution matches a target distribution $\pi$.  
(See~\cite{fishman1996} for more details.)

One ingredient is missing:  the question of how large $t$ must be before
the distribution of $X_t$ is close to $\pi$ in some sense such as total
variation.  This $t$ is known as the {\em mixing time} of a Markov chain,
and unless it can be found for the chain in question, MCMC remains a 
heuristic rather than an algorithm for approximate sampling.

A breakthrough occurred when Jerrum and Sinclair developed
the ideas of {\em conductance} and {\em canonical paths} into tools 
capable of proving the mixing
time for complex 
chains on high dimensional spaces.  In \cite{jerrums1989}, they
utilized conductance to show that a chain of Broder~\cite{broder1986} 
for generating uniformly
from the perfect matchings of a graph
was rapidly mixing
under a condition that encompassed a range of interesting 
problems such as uniform generation of regular graphs~\cite{jerrums1990b}.

The development of canonical paths 
for the Ising model~\cite{jerrums1993} followed.
The use of approximate samples derived from a Markov chain 
together with selfreducibility~\cite{jerrumvv1986}
yields an approximation for the partition function of the Ising model,
and this is still the only fully polynomial time 
randomized approximation scheme (fpras) known for this problem.

Later uses of canonical paths included an extension from perfect matchings
to all matchings~\cite{jerrums1996} and an algorithm for finding 
perfect matchings in polynomial time in all graphs 
(\cite{jerrumsv2004},\cite{bezakovasvv2006}).  

Canonical path approaches have 
also been used on such varied problems such as choosing
approximately uniformly from a convex set~\cite{dyerf1991} and 0-1 Knapsack 
solutions~\cite{morriss2004}.

This work is a step towards understanding the relationship between
some of these problems.  We show
\begin{itemize}
  \item{Sampling from the subgraphs Ising model with zero magnitization
        can be accomplished by generating a perfect matching in a graph
        linear in the size of the original graph.}
  \item{Sampling from the subgraphs Ising model with positive magnitization 
        on a graph can be accomplished by
        generating a matching in a graph linear in the 
        size of the original graph.}
  \item{Sampling from perfect matchings in unbalanced bipartite graphs 
        can be reduced to sampling from perfect matchings in balanced bipartite
        graphs.}
  \item{Sampling from matchings in a bipartite graph can be reduced to sampling 
        from perfect matchings in a bipartite graph.}
\end{itemize}
Matchings in unbalanced bipartite graphs arise in the 
approximation of the permanent of 
rectangular matrices,
used in bounding the performance of digital mobile radio systems 
(see~\cite{smithd2001}.)  While conductance could be used directly to 
bound the mixing time of an appropriate chain for this problem, it is easier
to just utilize a sampling reduction.

The remainder of the paper is organized as follows.  Section~\ref{SEC:problems} 
describes the
various distributions that will be studied.  
Section~\ref{SEC:reductions} gives the general form of reduction 
by adding edges and nodes that will be used throughout.
Section~\ref{SEC:ising2matchings}
details the reduction
from the subgraphs world for Ising to the matching distribution.  
Section~\ref{SEC:unbalanced2balanced} 
presents the reduction from unbalanced 
bipartite graphs to balanced bipartite graphs
while Section~\ref{SEC:matchings2perfectmatchings}
gives 
the reduction from matchings to perfect matchings in bipartite graphs.
Finally Section~\ref{SEC:consequences}
notes how this reduction leads to deterministic bounds on
the Ising partition function for large magnitization problems.

\section{The models}
\label{SEC:problems}

This section describes the three models that will  
appear throughout the paper:  The Ising
model, weighted matchings in graphs (the monomer-dimer model in physics), and 
weighted perfect matchings in graphs (the dimer model).  Each of these
three problems is presented in terms of a weight function $w(x)$ where
$x$ is a configuration in state space $\Omega$.  Create a distribution
$\pi$ by setting $\pi(x) = w(x) / Z$, where 
$Z = \sum_{y \in \Omega} w(y)$ is called the {\em partition function}.

\paragraph{The Ising model}
Originally a model of magnetism, the Ising model has a long history of study 
because of the existence of a phase transition on two dimensional lattices 
(see~\cite{simon1993}.)
Three different 
formulations of this model use spins, subgraphs, and random clusters.
While the spins formulation is the most widely known, 
the subgraphs formulation will be of 
most use here (as was also the case in~\cite{jerrums1993}.) 

Given a graph $G = (V,E)$, the state space 
for the subgraphs world is $\Omega_{\on{subs}} = \{0,1\}^E$, 
so that each state, also known as a configuration, indexes a subgraph
of $G$.  Like many distributions of interest, the subgraphs world
is given as a nonnegative weight function over configurations that
is normalized by the partition function.

The parameters for the subgraphs world are as follows.  Each
edge $e$ has weight $\lambda(e) \in [0,1]$ that controls the strength
of interaction between the endpoints of the nodes, and each node $i$
has weight $\mu(i) \in [0,1]$ 
that controls the strength of the magnetic field
for that node.  Given the $\lambda$ and $\mu$ vectors, the weight of 
a configuration is:
\begin{equation}
\label{EQN:weight}
w_\on{subs}(x) = \left[ \prod_{e:x(e) = 1} \lambda(e) \right]
       \left[ \prod_{i:\on{deg}(i) {\textrm{ is odd under }} x} \mu(i) \right].
\end{equation}
Here $\on{deg}(i)$ under $x$ is $\sum_{j:\{i,j\} \in E} x(j)$, and
counts the number of edges with $x(e) = 1$ such that one endpoint
of $e$ is $i$.  As usual, the empty product is taken to be 1.

The vectors $\lambda$ and $\mu$ are in a one to one correspondence with
the inverse temperature and magnetic field parameters found in the more
common spins worlds formulation.  Moreover, the normalizing constant 
for the subgraphs world $Z_\on{subs}$ is a multiple of the normalizing 
constant for the spins world, and the multiple can be calculated explicitly
 (see~\cite{jerrums1993}.)

Calculation of the partition function $Z_\on{subs}$ 
was shown in~\cite{jerrums1993}
to be a $\#P$ complete problem for general graphs, 
and so it is unlikely that a polynomial time
algorithm for finding $Z_\on{subs}$ exactly will be found.

\paragraph{Matchings}

A {\em matching} in a graph is a collection of edges such that no
two edges share an adjacent node.  As before, a collection of edges
can be indexed using $x(e) = 1$ if the edge is in the collection, and 
$x(e) = 0$ if it is not.  Using the weight formulation from before,
$$w_\on{mat}(x) = \left[ \prod_{e:x(e) = 1} \lambda(e) \right]
       \left[ \prod_{i:\on{deg}(i) \geq 2 {\textrm{ under } x}} 
                  0 \right].$$
As with Ising above, the empty product is taken to be 1, and so the
only configurations with positive weight have no nodes with degree
greater than 2.

There exists an algorithm for computing $Z_\on{mat}$ when the graph
$G$ is planar (\cite{fisher1961,kasteleyn1961,temperleyf1961}), but
for general graphs the problem is $\# P$ complete (see~\cite{jerrums1996}.)

\paragraph{Perfect matchings}

A {\em perfect matching} in a general graph is a collection of edges
such that every node is adjacent to exactly one edge.  That is,
$$w_\on{permat}(x) = \left[ \prod_{e:x(e) = 1} \lambda(e) \right]
       \left[ \prod_{i:\on{deg}(i) \neq 1 {\textrm{ under } x}} 
                  0 \right].$$
The partition function is also known as the {\em hafnian} of a matrix
where the $(i,j)$ entry is $\lambda(\{i,j\}).$  

Now consider a bipartite graph $G_B = (V_1 \sqcup V_2,E)$, where
$\#V_1 \leq \#V_2$.  Since the graph is bipartite 
$\{i,j\}$ in $E$ means that exactly one of $i$ and $j$ is in $V_1$,
and exactly one is in $V_2$.  Here a perfect matching will mean that
every node in the smaller partition $V_1$ is adjacent to exactly
one node.  This definition is designed to fit with the definition
of the permanent for rectangular matrices found in~\cite{smithd2001}.
$$w_\on{bipermat}(x) = \left[ \prod_{e:x(e) = 1} \lambda(e) \right]
       \left[ \prod_{i \in V_1:\on{deg}(i) \neq 1 {\textrm{ under } x}} 
                  0 \right].$$
The partition function is also known as the {\em permanent} of 
a matrix where the $(i,j)$ entry is the weight of an edge from the
$i$th node of $V_1$ to the $j$th node of $V_2$.

The permanent problem (and the more general problem of finding the hafnian)
is a $\# P$ complete problem, even under the restrictive condition that
$\lambda(e) \in \{0,1\}$ for all $e \in E$~\cite{jerrums1996}.

\subsection{Canonical paths results}
The presentation here follows that of~\cite{jerrums1996}.  
Recall that the total variation distance between two distributions
$\mu$ and $\pi$ is $\on{dist}_\on{TV}(\mu,\pi) = \sup_A |\mu(A) - \pi(A)|.$
In a canonical paths argument, for any two configurations $x,y$, a path
$x = x_0,x_1,\ldots,x_k = y$ is fixed where 
$\prob(X_{t+1} = x_{i+1}|X_t = x_i) > 0.$  For a set $\Gamma$ of paths,
let $\gamma(x,y)$ denote the path from state $x$ to state $y$.
Given $\Gamma$, the 
parameter $\bar\rho$ measures how often the paths use a particular
move from $x_i$ to $x_{i+1}$.  

A Markov chain is \emph{reversible} with respect to $\pi$ 
if for all states $i$ and $j$,
\[
\pi(i) \prob(X_{t+1} = j|X_t = i) 
 = \pi(j) \prob(X_{t+1} = i|X_t = j).
\]
If a chain is reversible with respect to a distribution $\pi$, 
then $\pi$ must be a stationary distribution.  
For such a reversible chain, let
$Q(e) = Q(\{i,j\}) = \pi(i) \prob(X_{t+1} = j|X_t = i).$  Then
\begin{equation}
\label{EQN:canonicalpaths}
\bar\rho = \bar\rho(\Gamma) := \max_{e} \frac{1}{Q(e)}
  \sum_{x,y:e \in \gamma(x,y)} \pi(x)\pi(y) \on{length}(\gamma(x,y)).
\end{equation}
A relationship between the mixing time of reversible chains 
and canonical paths was shown 
by Diaconis and Stroock~\cite{diaconiss1991}.
The following form of the theorem is from~\cite{jerrums1996}:

\begin{thm}
Let $\cal M$ be a finite, reversible, ergodic Markov chain with
$\prob(X_{t+1} = i|X_t = i) \geq 1/2$ for all $i$, and canonical paths
$\Gamma$.  Fix $x \in \Omega$ and $\epsilon > 0$.  Then
for all 
$t \geq \tau_\epsilon(x,\Gamma) := 
  \bar\rho(\Gamma) (\ln \pi(x)^{-1} + \ln \epsilon^{-1})$ 
and any $A \subseteq \Omega$:
$$|\prob(X_{t} \in A|X_0 = x) - \pi(A)| \leq \epsilon.$$
\end{thm}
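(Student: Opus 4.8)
The plan is to route the argument through the spectral representation of the chain and then use the canonical paths to control the spectral gap. Write $P(i,j) = \prob(X_{t+1}=j \mid X_t = i)$ for the transition matrix. Reversibility makes $P$ self-adjoint on $L^2(\pi)$ with inner product $\langle f, g\rangle = \sum_i \pi(i) f(i) g(i)$, so there is an orthonormal basis of real eigenfunctions $f_0 \equiv 1, f_1, \ldots, f_{N-1}$ with eigenvalues $1 = \lambda_0 > \lambda_1 \geq \cdots \geq \lambda_{N-1}$. The first step is to dispose of the negative end of the spectrum: the hypothesis $P(i,i) \geq 1/2$ lets us write $P = \tfrac12 I + \tfrac12 R$ with $R = 2P - I$ a stochastic matrix reversible with respect to $\pi$, whose eigenvalues lie in $[-1,1]$; hence every eigenvalue of $P$ lies in $[0,1]$, and in particular $\lambda_{N-1} \geq 0$, so $\max_{i \geq 1}|\lambda_i| = \lambda_1$.

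Next I would obtain the standard $L^2$ bound on total variation. Expanding $P^t(x,y)/\pi(y) - 1 = \sum_{i \geq 1}\lambda_i^t f_i(x)f_i(y)$ in the eigenbasis, applying Cauchy--Schwarz to $\sum_y |P^t(x,y) - \pi(y)| = \sum_y \pi(y)\,|P^t(x,y)/\pi(y) - 1|$, and using orthonormality together with the completeness identity $\sum_{i}f_i(x)^2 = \pi(x)^{-1}$, one gets
\[
4\,\tv(P^t(x,\cdot),\pi)^2 \leq \lambda_1^{2t}\,\frac{1-\pi(x)}{\pi(x)} \leq \lambda_1^{2t}\,\pi(x)^{-1},
\]
so that $\sup_A |\prob(X_t \in A \mid X_0 = x) - \pi(A)| \leq \tfrac12\,\pi(x)^{-1/2}\,\lambda_1^{t}$. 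This reduces everything to bounding $\lambda_1$.

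The heart of the argument, and the step I expect to be the main obstacle, is the Poincar\'e inequality $1 - \lambda_1 \geq 1/\bar\rho(\Gamma)$ relating the spectral gap to the path congestion. Using the variational characterization $1 - \lambda_1 = \min_\phi \mathcal{E}(\phi,\phi)/\var_\pi(\phi)$, where $\mathcal{E}(\phi,\phi) = \sum_{e=\{i,j\}} Q(e)(\phi(i)-\phi(j))^2$ is the Dirichlet form, I would write $\var_\pi(\phi) = \tfrac12\sum_{x,y}\pi(x)\pi(y)(\phi(x)-\phi(y))^2$, telescope each difference $\phi(y)-\phi(x)$ along the canonical path $\gamma(x,y)$, and bound $(\phi(x)-\phi(y))^2$ by $\on{length}(\gamma(x,y))\sum_{e \in \gamma(x,y)}(\phi(e^{+})-\phi(e^{-}))^2$ via Cauchy--Schwarz. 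Interchanging the order of summation to collect, for each edge $e$, the quantity $\sum_{x,y:\,e\in\gamma(x,y)}\pi(x)\pi(y)\on{length}(\gamma(x,y))$ and invoking the definition of $\bar\rho$ in~(\ref{EQN:canonicalpaths}) yields $\var_\pi(\phi) \leq \bar\rho\,\mathcal{E}(\phi,\phi)$, hence $\lambda_1 \leq 1 - 1/\bar\rho \leq e^{-1/\bar\rho}$. The delicate points here are the careful bookkeeping of directed versus undirected edges and the factors of $\tfrac12$, and verifying that the congestion sum is exactly the quantity normalized by $Q(e)$ in the definition of $\bar\rho$.

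Finally I would combine the pieces: $\lambda_1^t \leq e^{-t/\bar\rho}$ gives $\sup_A|\prob(X_t\in A\mid X_0=x)-\pi(A)| \leq \tfrac12\,\pi(x)^{-1/2}e^{-t/\bar\rho}$. Substituting $t = \tau_\epsilon(x,\Gamma) = \bar\rho(\ln\pi(x)^{-1}+\ln\epsilon^{-1})$ makes $e^{-t/\bar\rho} = \pi(x)\epsilon$, so the bound becomes $\tfrac12\,\pi(x)^{1/2}\epsilon \leq \epsilon$; since the right-hand side is decreasing in $t$, the inequality persists for all $t \geq \tau_\epsilon(x,\Gamma)$, completing the proof.
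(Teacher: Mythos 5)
The paper contains no proof of this theorem at all---it is quoted verbatim from Jerrum and Sinclair's monograph, with the underlying argument due to Diaconis--Stroock and Sinclair---and your proposal is exactly that standard proof: laziness ($P = \tfrac12 I + \tfrac12 R$) forces a nonnegative spectrum, the eigenbasis expansion with Cauchy--Schwarz and completeness gives $4\,\tv(P^t(x,\cdot),\pi)^2 \leq \lambda_1^{2t}(1-\pi(x))/\pi(x)$, and the path-congestion Poincar\'e inequality gives $\lambda_1 \leq 1 - 1/\bar\rho \leq e^{-1/\bar\rho}$. The argument is correct as sketched---the directed-versus-undirected bookkeeping you flag resolves cleanly if paths are assigned to ordered pairs and the congestion sum is collected over oriented transitions (reversibility makes $Q$ orientation-independent, so this matches the paper's $Q(\{i,j\})$), yielding $\var_\pi(\phi) \leq \bar\rho\,\mathcal{E}(\phi,\phi)$ with no stray factor of 2---and substituting $t = \tau_\epsilon(x,\Gamma)$ in fact produces the slightly stronger bound $\tfrac12\sqrt{\pi(x)}\,\epsilon \leq \epsilon$.
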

The number of steps necessary for the total variation distance to fall
below $\epsilon$ from a starting state 
will be referred to as the \emph{mixing time} of the 
chain.  The theorem states that $\tau_\epsilon(x,\Gamma)$ 
is an upper bound on
the mixing time.  Starting at $x$, this upper bound is proportional
to the loading $\bar\rho$ for the set of canonical paths $\Gamma$.  
Therefore, it is important to find paths that keep the use of any one
edge as low as possible.  

Now the results for the subgraphs world and matching can be stated.
In~\cite{jerrums1993}, it was shown for the subgraphs world that 
\begin{equation*}
\bar\rho(\Gamma) \leq 2 (\#E)^2 / \min_i \mu(i)^4.
\end{equation*}

The configuration with $x(e) = 0$ for all $e$ has weight 1.  All 
configurations have weight at most 1 and there
are at most $2^{\#E}$ configurations, so $\pi(\vec{0}) \geq 2^{-\#E}$,
and starting from the empty
configuration the mixing time is bounded above by
\begin{equation}
\label{EQN:mixtimeIsing}
2 (\# E)^2(\max_i \mu(i)^{-4})[ (\ln 2)\#E + \ln \epsilon^{-1}].
\end{equation}
In~\cite{jerrums1996}, it was shown for the matching problem 
that there exist paths where
\begin{equation*}
\bar\rho(\Gamma) \leq 4 (\#E) (\#V) \lambda'^2, \ \ 
 \lambda' := \max\{1,\lambda(e_1),\ldots,\lambda(e_{\#E})\}
\end{equation*}

It is possible to find the maximum weight matching in polynomial time
via Edmonds algorithm~\cite{edmonds1965}.  There are at most 
$2^{\#E}$ matchings, and so starting from this maximum weight matching
the mixing time is at most
\begin{equation}
\label{EQN:mixtimematchings}
4(\#E) (\#V)\lambda'^2[(\ln 2)\#E + \ln \epsilon^{-1}].
\end{equation}

For perfect matchings in general graphs, no polynomial time algorithm
is known.  However, for bipartite graphs, in a landmark paper 
Jerrum, Sinclair, and Vigoda~\cite{jerrumsv2004} showed how with a
sequence of Markov chains, it was possible to obtain approximately
drawn variates in polynomial time.  The running time of this
procedure was later improved to 
$\Theta(\#V^7 \ln^4 \#V)$~\cite{bezakovasvv2006}.

In the the remainder of the paper, it is shown how to obtain draws from 
the subgraphs world distribution by generating draws from the 
matching distribution.  The running time using the reduction 
will be of the same order as
a direct approach using canonical paths.

\section{Reductions by adding edges and nodes}
\label{SEC:reductions}

All of the reductions presented here have the same form.  
Given graph $G = (V,E)$, consider
drawing from a distribution $\pi$ on $\{0,1\}^E$.  
First construct a new graph $G' = (V',E_1 \cup E_2)$, where
$\phi$ is a one to one correspondence from $E$ to $E_1$.
Then draw $X'$ from $\pi'$ on $G'$, and let
$X(E) = X'(\phi(E)).$  Then the following lemma gives a sufficient
condition for $X$ to be a draw from $\pi$.

\begin{thm}
\label{THM:weights}
Suppose that $\pi(x) = w(x) / Z$ where $Z = \sum_y w(y)$ when used
on $G = (V,E)$, and $\pi'(x) = w'(x) / Z'$ where $Z' = \sum_y w(y)$ over
$G' = (V,E_1 \cup E_2)$, and $\phi$ is a one to one correspondence from
the edges of $E$ to $E_1$.  Suppose that the weight functions satisfy:
\begin{equation}
\label{EQN:weights}
\sum_{x':x'(\phi(E)) = x(E)} w'(x') = w(x) C,
\end{equation}
where $C$ is a constant.  If (\ref{EQN:weights}) holds,
$X'\sim \pi'$ and 
$X(E) = X'(\phi(E))$, then $X \sim \pi$.
\end{thm}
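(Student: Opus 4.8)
The plan is to verify directly that $\prob(X = x) = \pi(x)$ for every configuration $x \in \{0,1\}^E$, since matching the probability mass function on every state is precisely what it means for $X \sim \pi$. The key observation is that the event $\{X = x\}$ coincides with the event that $X'$ agrees with $x$ on the edges $E_1 = \phi(E)$, because $X(E) = X'(\phi(E))$ by construction; the values of $X'$ on the auxiliary edges $E_2$ are irrelevant to $X$. Everything then reduces to summing the $G'$-weights over the appropriate fiber and comparing with the hypothesis.

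First I would fix $x \in \{0,1\}^E$ and write
\[
\prob(X = x) = \prob(X'(\phi(E)) = x(E)) = \sum_{x' : x'(\phi(E)) = x(E)} \pi'(x') = \frac{1}{Z'} \sum_{x' : x'(\phi(E)) = x(E)} w'(x').
\]
Applying hypothesis (\ref{EQN:weights}) to the inner sum immediately collapses it to $w(x)C$, giving $\prob(X = x) = w(x) C / Z'$. Next I would identify the normalizing constant $Z'$. The fibers $\{x' : x'(\phi(E)) = x(E)\}$, indexed by $x \in \{0,1\}^E$, partition the full state space $\{0,1\}^{E_1 \cup E_2}$ of $G'$, since every $x'$ restricts to exactly one configuration on $E_1$ and $\phi$ is a bijection onto $E_1$. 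Summing (\ref{EQN:weights}) over all $x$ therefore yields
\[
Z' = \sum_{x'} w'(x') = \sum_{x \in \{0,1\}^E} \sum_{x' : x'(\phi(E)) = x(E)} w'(x') = \sum_{x} w(x) C = C Z.
\]
Substituting back gives $\prob(X = x) = w(x) C / (C Z) = w(x)/Z = \pi(x)$, which is the desired conclusion.

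The computation is short, and the one point requiring care is the step where $C$ is pulled out of the sum over $x$: this uses crucially that $C$ is a single constant independent of $x$, so that the factors of $C$ in the numerator and in $Z' = C Z$ cancel. Were the right-hand side of (\ref{EQN:weights}) instead $w(x)C(x)$ for a nonconstant $C(x)$, the fibers would reweight the marginal and the conclusion would fail. Thus the substance of the lemma is not the probabilistic manipulation here but the condition (\ref{EQN:weights}) itself; the genuine work lies in the later sections, where one must exhibit auxiliary edge sets $E_2$ and weights $w'$ for which the fiber sums factor through $w(x)$ with a common constant.
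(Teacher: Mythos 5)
Your proof is correct and takes essentially the same route as the paper's: fix $x$, sum $\pi'$ over the fiber $\{x' : x'(\phi(E)) = x(E)\}$, and apply the hypothesis to obtain $\prob(X = x) = w(x)C/Z'$. The only difference is that the paper stops at proportionality (concluding $X \sim \pi$ and $Z = Z'/C$ directly, since a probability distribution proportional to $w$ must equal $\pi$), whereas you explicitly verify $Z' = CZ$ by summing the hypothesis over all $x$ --- a harmless, slightly more explicit finish.
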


\begin{proof}
Fix $x \in \{0,1\}^E$, and note
$$\prob(X = x) = \sum_{x'} \prob(X = x|X' = x')\prob(X' = x') =
 \sum_{x':x'(\phi(E)) = x(E)} w'(x') / Z'.$$
But by assumption, the numerator of the right hand side is just $C w(x)$, so 
$\prob(X = x) = w(x) C / Z'$, so $X \sim \pi$ and $Z = Z' / C$.
\end{proof}

\paragraph{Reduction for subgraphs to maximum degree 3}
This theorem can be applied to reduce any subgraphs world problem to
one where the maximum degree is three.  Suppose that $i$ is a 
node with degree greater than 3 in $G = (V,E)$.  Let 
$j_1,j_2,\ldots,j_{\on{deg}(i)}$ denote the neighbors of $i$.
Then consider the subgraphs distribution on a new graph $G' = (V',E')$
where $V' = (V \setminus \{i\}) \cup \{i_1,i_2\}$, and 
\begin{eqnarray*}
E' & = & \left(E \cup \{i_1,i_2\}
  \cup \{\{i_1,j_1\},\{i_1,j_2\}\} \cup
 \{\{i_2,j_3\},\{i_2,j_4\},\ldots,\{i_2,j_{\on{deg}(i)}\} \} \right) \\
& &  
 \setminus \cup_{j:\{i,j\} \in E} \{\{i,j\}\}.
\end{eqnarray*}

In other words, node $i$ is being split into two nodes, $i_1$ and $i_2$.
Node $i_1$ is connected to the first two neighbors of $i$, while $i_2$
is connected to the remaining neighbors of $i$.  Finally, the nodes
$i_1$ and $i_2$ are connected.  Figure~\ref{FIG:expansion} illustrates
this process.

\begin{figure}[ht]
  \begin{picture}(400,150)
    \put(100,75){\circle{15}}
    \put(98,72){$i$}
    \thicklines
    \put(94,80){\line(-2,1){30}}
    \put(94,70){\line(-2,-1){30}}

    \put(105,81){\line(2,3){20}}
    \put(107,79){\line(2,1){30}}
    \put(107,72){\line(2,-1){30}}
    \put(105,69){\line(2,-3){20}}

    \put(250,75){\circle{15}}
    \put(247,72){$i_1$}
    \put(244,80){\line(-2,1){30}}
    \put(244,70){\line(-2,-1){30}}

    \put(292,75){\line(-1,0){35}}
    \put(273,80){$1$}
    \put(300,75){\circle{15}}
    \put(297,72){$i_2$}
    \put(305,81){\line(2,3){20}}
    \put(307,79){\line(2,1){30}}
    \put(307,72){\line(2,-1){30}}
    \put(305,69){\line(2,-3){20}}
  \end{picture}
 
  \caption{Splitting a node}
  \label{FIG:expansion}
\end{figure}
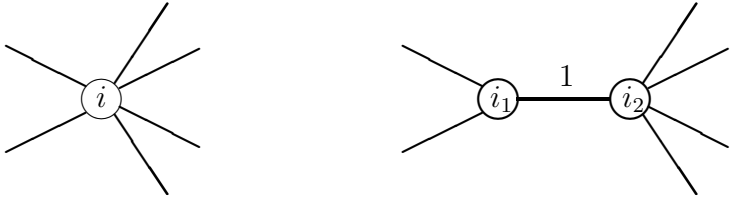

The function $\phi$ is the identity map for any edge that is not adjacent
to $i$.  Edges $\{i,j_1\}$ and $\{i,j_2\}$ map to $\{i_1,j_1\}$ and 
$\{i_1,j_2\}$ respectively, while $\{i,j_k\}$ maps to $\{i_2,j_k\}$ for 
$k \in \{3,\ldots,\on{deg}(i)\}$.

The point of splitting $i$ is that node $i_1$ now has degree 3, and 
$\on{deg}(i_2) = \on{deg}(i) - 1.$  Repeated $\on{deg}(i) - 3$ times,
the nodes that result from splitting $i$ all have degree 3.  This can
be repeated for every node of the graph with degree greater than 3 until
none remain.  The following lemma verifies that this is a valid reduction.

\begin{lem} Suppose $G = (V,E)$ and $G' = (V',E')$ are as described above.
Set $\lambda'(\phi(E)) = \lambda(E),$
$\mu'(V \setminus \{i\}) = \mu(V \setminus\{i\})$,  
$\lambda'(\{i_1,i_2\}) = 1$, and 
$$\mu'(i_1) = \mu'(i_2) = \mu(i)^{-1} - \sqrt{\mu(i)^{-2} - 1},$$
when $\mu(i) > 0$, and $\mu'(i_1) = \mu'(i_2) = 0$ otherwise.
Then drawing $X'$ from $\pi_{\on{subs}}$ on $G'$ and setting
$X(E) = X'(\phi(E))$ yields $X \sim \pi_{\on{subs}}$ on $G$.
\end{lem}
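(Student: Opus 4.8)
The plan is to apply Theorem~\ref{THM:weights}. Since $\phi$ identifies $E$ with every edge of $G'$ except the single new edge $\{i_1,i_2\}$, for any fixed $x \in \{0,1\}^E$ the sum in (\ref{EQN:weights}) ranges over exactly the two extensions of $x$ determined by the value $c := x'(\{i_1,i_2\}) \in \{0,1\}$. Thus the entire verification reduces to computing a two-term sum and matching it against $w_{\on{subs}}(x)$.

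First I would argue that almost every factor of $w_{\on{subs}}(x')$ is unaffected by the splitting and hence common to both graphs. The edge weights agree because $\lambda'(\phi(e)) = \lambda(e)$ and the new edge carries weight $\lambda'(\{i_1,i_2\}) = 1$, so the product of edge weights equals $\prod_{e:x(e)=1}\lambda(e)$ for either value of $c$. For the node factors, each neighbor $j_k$ retains the same incident present edges in $G$ and $G'$ (only the other endpoint is relabeled from $i$ to $i_1$ or $i_2$), so its degree parity, and therefore its $\mu$-contribution, is unchanged; the same holds for every node not touching $i$. Hence the only discrepancy between $w(x)$ and $w'(x')$ comes from node $i$ on the $G$ side versus the pair $i_1,i_2$ on the $G'$ side.

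Next I would isolate that discrepancy. Writing $a = x(\{i,j_1\}) + x(\{i,j_2\})$ and $b = \sum_{k\geq 3} x(\{i,j_k\})$, the degree of $i$ under $x$ is $a+b$, while under $x'$ the degrees of $i_1$ and $i_2$ are $a+c$ and $b+c$. With $m := \mu'(i_1) = \mu'(i_2)$, the contribution of the split node to $\sum_c w'(x')$ is $\sum_{c\in\{0,1\}} m^{\,\ind[a+c\text{ odd}] + \ind[b+c\text{ odd}]}$, whereas node $i$ contributes $\mu(i)^{\,\ind[a+b\text{ odd}]}$ to $w(x)$. A short case check on the parities of $a$ and $b$ yields that the two-term sum equals $1+m^2$ when $a+b$ is even and $2m$ when $a+b$ is odd.

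Finally I would match constants. Condition (\ref{EQN:weights}) demands a single $C$ with $1+m^2 = C$ in the even case (where $\mu(i)^{\,\ind[\cdot]} = 1$) and $2m = C\,\mu(i)$ in the odd case. Eliminating $C$ gives $\mu(i) = 2m/(1+m^2)$, and solving the resulting quadratic $\mu(i)\,m^2 - 2m + \mu(i) = 0$ for the root in $[0,1]$ produces precisely $m = \mu(i)^{-1} - \sqrt{\mu(i)^{-2}-1}$, with $C = 1+m^2$. When $\mu(i)=0$ the choice $m=0$ is consistent with $C=1$, since the odd-case weight vanishes on both sides. With (\ref{EQN:weights}) thereby verified, Theorem~\ref{THM:weights} yields $X \sim \pi_{\on{subs}}$ on $G$. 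The delicate point I expect to matter most is the constant-matching step: one must confirm that a single value of $m$, not two different ones, simultaneously handles both parity cases, which is exactly what pins down the stated formula, and one must check that the chosen root lies in $[0,1]$ while the other root exceeds $1$ and is therefore not an admissible node weight.
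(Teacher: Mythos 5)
Your proposal is correct and takes essentially the same route as the paper: both invoke Theorem~\ref{THM:weights}, reduce condition~(\ref{EQN:weights}) to a two-term sum over the value of $x'(\{i_1,i_2\})$, split into parity cases giving $1+m^2$ (degree of $i$ even) and $2m\,\mu(i)^{-1}$ times the weight (degree odd), and reconcile the two via the identity $\mu(i)(1+m^2) = 2m$, handling $\mu(i)=0$ separately. The only difference is presentational --- the paper verifies that the stated $\mu'(i_1)=\mu'(i_2)$ satisfies both equations with a common $C$, whereas you derive it by eliminating $C$ from the quadratic, and your explicit check that the chosen root lies in $[0,1]$ is a small point the paper leaves implicit.
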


\begin{proof} Fix $x \in \{0,1\}^E$.  
In light of Theorem~\ref{THM:weights}, it suffices to show
\begin{equation}
\label{EQN:verify}
\sum_{x':x'(\phi(E)) = x(E)} w(x') = w(x) C,
\end{equation}
where $w(\cdot) = w_{\on{subs}}(\cdot).$
There is only one edge
in $E'$ not in $\phi(E)$:  the edge $\{i_1,i_2\}$.  There are two 
values for this edge, and so the sum consists of two terms.

The first case to consider is when $i$ has odd degree in $x$.
When $\mu(i) = 0$ the configuration $x$ has probability 0 under 
$\pi_{\on{subs}}$, 
and so no check is necessary.

When $\mu(i) > 0$, then whether 
edge $\{i_1,i_2\}$ is 1 or 0 in $x'$, exactly one of 
$i_1$ and $i_2$ has odd degree, while the other is even.  Hence a factor
of $\mu'(i_1)$ or $\mu'(i_2)$ 
is contributed to the weight, but a factor of 
$\mu(i)$ is missing so $w'(x') = \mu'(i_1) \mu(i)^{-1} w(x)$ for one term
in the sum, and $w'(x') = \mu'(i_2) \mu(i)^{-1} w(x)$ in the other term. 
Hence (\ref{EQN:verify}) becomes:
\begin{equation}
\label{EQN:verify2}
\mu(i)^{-1} \mu'(i_1) w(x) + \mu(i)^{-1} \mu'(i_2) w(x) = w(x) C.
\end{equation}

On the other hand, when $i$ has even degree, 
one choice of $x'(\{i_1,i_2\})$ leads to both $i_1$ and $i_2$ having
odd degree, while with the other choice both have even degree.  Hence
(\ref{EQN:verify}) becomes:
\begin{equation}
\label{EQN:verify3}
w(x) + \mu'(i_1) \mu'(i_2) w(x) = w(x) C.
\end{equation}

When $\mu(i) = 0$, only this equation needs to be satisfied, and
so $C = 1 + \mu'(i_1) \mu'(i_2)$ applies.  However, note that 
\[
1 + \mu'(i_1) \mu'(i_2) 
  = 1 + \left(\mu(i)^{-1} - \sqrt{\mu(i)^{-2} - 1}\right)^2
 = 2\mu(i)^{-2} - 2 \mu(i)^{-1} \sqrt{\mu(i)^{-2} - 1}.
\]
This right hand side is exactly $\mu(i)^{-1}[\mu'(i_1) + \mu'(i_2)],$
so setting $C$ equal to this expression satisfies both
(\ref{EQN:verify2}) and (\ref{EQN:verify3}), finishing the proof.
\end{proof}

The downside of this construction is that the magnetic field is smaller
at each of the duplicated nodes since $\mu'(i_1) < \mu(i)$ unless 
$\mu(i) \in \{0,1\}$, in which case they are equal.  The good news is
that once $\mu(i) \leq \#V^{-1}$ it can be replaced by $\#V^{-1}$ without
changing the distribution too much.  When $\mu(i)$
is this small, the chance that a draw from $\pi_{\on{subs}}$ will result
in all nodes having even degree is at least $\exp(-1),$ so simple
acceptance rejection can be used~\cite{jerrums1993}.

Consider starting with a graph $G = (V,E)$ and repeatedly applying this
reduction until the maximum degree of the graph is 3.  Then the number
of new edges created at each node equals the degree of the node minus
3.  Hence there are at most $\sum_i \on{deg}(i) = 2\#E$ edges created,
resulting in at most $3\#E$ total edges.  

\paragraph{Reduction for subgraphs to all degree 3 nodes}

Not only can the maximum degree be reduced to 3, but the degree 1 and 2
nodes can be eliminated as well.

\begin{lem} Let $G = (V,E)$ be a graph with degree 1 node $i$ and
edge $\{i,j\}$.  
Create $G' = (V',E')$ by $V' = V \setminus \{i\}$,
$E' = E \setminus \{i,j\}$.  Say $X' \sim \pi_{\on{subs}}$ on
$G'$ with parameters $\lambda'(E') = \lambda(E)$, 
$\mu'(V' \setminus \{j\}) = \mu(V' \setminus \{j\})$, and 
$$\mu'(j) =  \frac{\mu(j) + \lambda(\{i,j\}) \mu(i)}
                  {1 + \lambda(\{i,j\})\mu(i)\mu(j)}.$$
Let $X(E') = X'(E')$.  
Given $X(E')$, let $X(\{i,j\}) \sim \bern(\delta)$,
where $\delta$ is $\delta_1$ if $\on{deg}(j)$ is odd under $X'$, and 
$\delta_2$ if $\on{deg}(j)$ is even under $X'$ where
$$\delta_1 = \frac{\lambda(\{i,j\}) \mu(i)}{\mu(j) + \lambda(\{i,j\})\mu(i)}, \ 
  \delta_2 = \frac{\lambda(\{i,j\}) \mu(i) \mu(j)}
             {1 + \lambda(\{i,j\}) \mu(i) \mu(j)}.$$
Then $X \sim \pi_{\on{subs}}$ with parameters $\lambda$ and $\mu$.
\end{lem}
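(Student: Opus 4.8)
The plan is to compute $\prob(X = x)$ directly for an arbitrary fixed $x \in \{0,1\}^E$ and verify that it is proportional to $w_{\on{subs}}(x)$ with a constant of proportionality that does not depend on $x$; normalizing then forces $X \sim \pi_{\on{subs}}$ on $G$. Since $X(E') = X'(E')$ holds deterministically and $X(\{i,j\})$ is drawn conditionally, I would factor
\[
\prob(X = x) = \pi'(x|_{E'}) \cdot \prob\bigl(X(\{i,j\}) = x(\{i,j\}) \mid X' = x|_{E'}\bigr),
\]
where $x|_{E'}$ denotes the restriction of $x$ to $E'$ and the second factor is the prescribed Bernoulli probability $\delta$.

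First I would isolate the common factors. Because the edge $\{i,j\}$ is incident only to $i$ and $j$, every node $k \notin \{i,j\}$ has the same degree under $x$ in $G$ as under $x|_{E'}$ in $G'$, and its parameter is unchanged; hence the contribution of those nodes (call it $M$) and of the edges in $E'$ (call it $P$) is identical in $w_{\on{subs}}(x)$ and in $w'_{\on{subs}}(x|_{E'})$. What remains to track is the edge factor $\lambda(\{i,j\})^{x(\{i,j\})}$, the factor $\mu(i)$ (present exactly when $x(\{i,j\}) = 1$, since $i$ has degree $1$), the factor $\mu(j)$ (present exactly when $\deg_G(j) = \deg_{G'}(j) + x(\{i,j\})$ is odd), and, on the $G'$ side, the factor $\mu'(j)$ (present exactly when $\deg_{G'}(j)$ is odd).

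The heart of the argument is a four-way case split on the parity of $\deg_{G'}(j)$ and on the value $x(\{i,j\}) \in \{0,1\}$. When $\deg_{G'}(j)$ is odd, $\pi'(x|_{E'})$ carries the factor $\mu'(j)$ and the conditional draw uses $\delta_1$; checking $x(\{i,j\}) = 0$ and $x(\{i,j\}) = 1$ separately shows that in both subcases $\prob(X=x) = w_{\on{subs}}(x)\cdot \mu'(j)/[Z'(\mu(j)+\lambda(\{i,j\})\mu(i))]$. When $\deg_{G'}(j)$ is even, $\mu'(j)$ is absent and the draw uses $\delta_2$; the analogous check gives $\prob(X=x) = w_{\on{subs}}(x)/[Z'(1+\lambda(\{i,j\})\mu(i)\mu(j))]$ in both subcases.

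The main obstacle, and really the only substantive step, is reconciling these two constants: the reduction is valid precisely when
\[
\frac{\mu'(j)}{\mu(j)+\lambda(\{i,j\})\mu(i)} = \frac{1}{1+\lambda(\{i,j\})\mu(i)\mu(j)},
\]
and solving this identity for $\mu'(j)$ returns exactly the value stipulated in the statement. Once the two constants coincide, $\prob(X=x)$ equals $w_{\on{subs}}(x)$ times a single $x$-independent constant across all four cases, so $X$ is distributed according to $\pi_{\on{subs}}$ on $G$ (and incidentally $Z_{\on{subs}}(G) = Z'(1+\lambda(\{i,j\})\mu(i)\mu(j))$). I would close by addressing the degenerate parameter values, such as $\mu(j) = \lambda(\{i,j\})\mu(i) = 0$, where $\delta_1$ is a formally undefined ratio $0/0$: there the affected configurations carry zero weight under $\pi_{\on{subs}}$, so the conditional probability may be fixed arbitrarily without disturbing the conclusion.
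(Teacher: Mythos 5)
Your proposal is correct and follows essentially the same route as the paper's own proof: both factor out the contribution $r(x)$ of the edges in $E'$ and the nodes other than $i,j$, split into the same four cases on the parity of $\deg_{G'}(j)$ and the value of $x(\{i,j\})$, and reduce everything to the single identity $\mu'(j)/(\mu(j)+\lambda(\{i,j\})\mu(i)) = (1+\lambda(\{i,j\})\mu(i)\mu(j))^{-1}$ --- the only difference being that the paper \emph{derives} $\mu'(j),\delta_1,\delta_2$ by solving the four proportionality equations while you \emph{verify} the stipulated values, a presentational distinction. Your explicit treatment of the degenerate $0/0$ case for $\delta_1$ (where $\mu'(j)=0$ makes the odd-degree configurations null) is a small point the paper passes over silently, and is a welcome addition.
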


\begin{proof}
Intuitively, a degree 1 node with no magnetic field is irrelevant to the
rest of draw since the edge will always be off.  In this case 
$\delta_1 = \delta_2 = 0$,
so the edge is always removed.
A degree 1 node with
magnetic field is linked only to its nearest neighbor, so it makes
that neighbor a little more likely to be spin up, which is the same as
increasing its magnetic field.  The amount of increase depends on the
magnetic field of $i$ and the strength of interaction to $j$.   

Fix $x \in \{0,1\}^E.$  Let 
$r(x) = \prod_{e \in E':x(e) = 1} \lambda(e)
        \prod_{k \in V' \setminus \{j\}:\on{deg}(k) {\textrm{ is odd in }} x} \mu(k).$
Then there is a unique $x' \in \{0,1\}^{E'}$
that matches $x$ on $E'$, and the following table summarizes the probabilities
needed: \newline
\begin{center}
\begin{tabular}{cccc}
$j$ under $x'$ & $w_{\on{subs}}(x')$ & $x(\{i,j\})$ & $w_{\on{subs}}(x)$ \\
\hline
odd & $r(x) \mu'(j)$ & 1 & $r(x) \lambda(\{i,j\}) \mu(i)$ \\
odd & $r(x) \mu'(j)$ & 0 & $r(x) \mu(j)$ \\
even & $r(x)$ & 1 & $r(x) \lambda(\{i,j\}) \mu(i) \mu(j)$ \\
even & $r(x)$ & 0 & $r(x)$ \\
\end{tabular}
\end{center}

Note $\prob(X = x) = \prob(X'(E') = x(E')) \prob(X(\{i,j\}) = x(\{i,j\})$.
Let $\delta_1 = \prob(X(\{i,j\}) = 1|j \textrm{ is odd in } X')$ and 
$\delta_2 = \prob(X(\{i,j\}) = 1|j \textrm{ is even in } X')$.
Requiring that this probability is proportional to $w_{\on{subs}}(x)$ together
with the above table gives rise to four equations ($C$ is the common constant
of proportionality):
\begin{eqnarray*}
r(x) \mu'(j) \delta_1 & = & r(x) \lambda(\{i,j\}) \mu(i) C \\
r(x) \mu'(j)(1 - \delta_1) & = & r(x) \mu(j) C \\
r(x) \delta_2 & = & r(x) \lambda(\{i,j\}) \mu(i) \mu(j) C \\
r(x) (1 - \delta_2) & = & r(x) C
\end{eqnarray*}
The $r(x)$ factor cancels out from both sides, and adding the last
two equations yields
$C = (1 + \lambda(\{i,j\}) \mu(i) \mu(j))^{-1}$ 
and then $\delta_2 = \lambda(\{i,j\}) \mu(i) \mu(j) C.$

Adding the first two equations yields 
$\mu'(j) = [\mu(j) + \lambda(\{i,j\})\mu(i)]C$, which in turn gives
$\delta_1 = \lambda(\{i,j\}) \mu(i) (\mu(j) + \lambda(\{i,j\}) \mu(i))^{-1}.$
Since $\mu'(i)$, $\delta_1$, and 
$\delta_2$ are in $[0,1]$, this completes the proof.
\end{proof}

Removing degree 2 nodes can be accomplished as well by taking these 
nodes and splitting them into two copies connected by an edge, 
making the degree of each copy equal to 3.  The details are presented
in the appendix.

\section{Reduction of Ising to matchings and perfect matchings}
\label{SEC:ising2matchings}

In this section the technique of the previous section will be used
to reduce the subgraphs world with no magnetic field to 
a perfect matching problem, and to reduce the subgraphs world with positive
magnetic field to a matching problem.

\paragraph{Reduction to perfect matchings when the magnetic field is zero}
When the magnetic field is zero (so $\mu(i) = 0$ for all $i$), the
subgraphs world model can be reduced to a problem where every 
degree is 3 and all $\mu(i)$ remain at 0.  It is from this graph
that the graph for perfect matchings will be constructed.

Each edge $\{i,j\}$ 
in the original graph is given two new nodes $v_{ij}$ and $v_{ji}$.
Connect these two nodes by an edge of weight $\lambda(\{i,j\})$.
Each node $i$ in the original graph is given a new node $d_i$.
Connect $d_i$ and $v_{ij}$ with an edge of weight 1 for all $j$.

Finally, connect nodes of the form $v_{ij}$, $v_{ik}$ for all $j$ and
$k$ with an edge with weight $1/3$.
Set $\phi(\{i,j\}) = \{v_{ij},v_{ji}\}$.
Figure~\ref{FIG:nomaggadget}
illustrates a piece of this transformation.

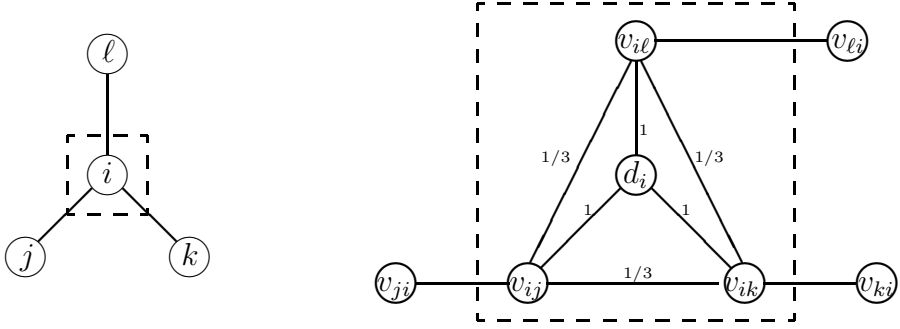
\begin{figure}[ht]
\begin{center}
  \begin{picture}(400,160)(50,0)
    \put(69,44){\circle{15}}
    \put(131,44){\circle{15}}
    \put(100,121){\circle{15}}
    \put(100,75){\circle{15}}
    \put(98,72){$i$}
    \put(67,41){$j$}
    \put(128,41){$k$}
    \put(98,118){$\ell$}
    \thicklines
    \put(106,70){\line(1,-1){20}}
    \put(94,70){\line(-1,-1){20}}
    \put(100,83){\line(0,1){30}}

    \put(85,60){\dashbox{5}(30,30)}

    \put(259,34){\circle{15}}
    \put(252,31){$\small v_{ij}$}
    \put(341,34){\circle{15}}
    \put(333,31){$\small v_{ik}$}
    \put(300,126){\circle{15}}
    \put(293,123){$\small v_{i\ell}$}
    \put(300,75){\circle{15}}
    \put(295,72){$\small d_i$}
    \thicklines
    \put(306,70){\line(1,-1){30}}
    \put(280,60){\tiny 1}
    \put(294,70){\line(-1,-1){30}}
    \put(317,60){\tiny 1}
    \put(300,83){\line(0,1){35}}
    \put(301,90){\tiny 1}

    \put(209,34){\circle{15}}
    \put(202,31){$\small v_{ji}$}
    \put(217,34){\line(1,0){35}}

    \put(391,34){\circle{15}}
    \put(384,31){$\small v_{k i}$}
    \put(348,34){\line(1,0){35}}

    \put(380,126){\circle{15}}
    \put(374,123){$\small v_{\ell i}$}
    \put(307,126){\line(1,0){65}}

    \put(298,118){\line(-1,-2){38}}
    \put(264,80){\tiny $1/3$}
    \put(302,118){\line(1,-2){38}}
    \put(322,80){\tiny $1 / 3$}
    \put(266,34){\line(1,0){65}}
    \put(295,36){\tiny $1 / 3$}

    \put(240,20){\dashbox{5}(120,120)}

  \end{picture}

\end{center}
\caption{New graph for degree 3 nodes}
\label{FIG:nomaggadget}
\end{figure}

For a graph $(V,E)$ where every node has degree 3, 
this transformation creates four nodes for each original node.  These
four nodes are connected by 6 new edges, and the original edges still
remain.  Hence after modification, the new graph has $4\#V$ nodes and 
$\# E + 6\#V$ edges.

\begin{lem}
\label{LEM:ising2perfect}
Given a graph $G = (V,E)$ with all nodes of degree 3,
$\mu(i)$ identically 0 and 
edge weights $\lambda$, let $n(v,1),$ $n(v,2)$ and $n(v,3)$ denote
the neighbors of node $v$.  For each $i$, set 
$A_i = \{d_i,v_{i,n(i,1)},v_{i,n(i,2)},
 v_{i,n(i,3)}\},$ and $E_i$ be all subsets of size 2 in $A_i$ so 
that $(A_i,E_i)$ is the complete graph on 4 vertices.  Set
\begin{eqnarray*}
V' & = & \cup_{i \in V} A_i \\
E' & = & \left( \cup_{\{i,j\} \in E} \{\{v_{ij},v_{ji}\}\} \right)
         \cup \left(\cup_i E_i \right).
\end{eqnarray*}
For edges of the form $e = \{v_{ij},v_{ji}\}$, set 
$\lambda'(e) = \lambda(\{i,j\})$.  For edges $e = \{d_i,v_{ij}\}$, set
$\lambda'(e) = 1$, and for edges $e = \{v_{ij},v_{ik}\}$, set 
$\lambda'(e) = 1/3.$

Then if $X'$ is drawn from $\pi_{\on{permat}}$ on $G'$ with $\lambda'$,
and $X(E) = X'(\phi(E)),$ then $X \sim \pi_{\on{subs}}$ on $G$ with
$\lambda$.
\end{lem}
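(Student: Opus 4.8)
The plan is to apply Theorem~\ref{THM:weights} with $w = w_{\on{subs}}$ and $w' = w_{\on{permat}}$, so that the whole argument reduces to verifying the weight identity~(\ref{EQN:weights}) for a suitable constant $C$; I claim $C = 1$ works. Fix a configuration $x \in \{0,1\}^E$. Since $\phi(\{i,j\}) = \{v_{ij},v_{ji}\}$, specifying $x$ is the same as fixing, for each original edge, whether its \emph{connector edge} lies in the matching $x'$. The key structural observation I would emphasize is that each vertex $v_{ij}$ belongs to exactly one gadget $A_i$ (through its $K_4$ edges) and touches exactly one connector edge, while each $d_i$ lies only in $A_i$. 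Consequently, once the connector edges are fixed by $x$, completing $x'$ to a perfect matching of $G'$ amounts to choosing, \emph{independently across gadgets}, an internal perfect matching of each $A_i$ on the vertices not already consumed by a connector edge.

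I would then localise to a single gadget $A_i$. The number of connector edges incident to $A_i$ that are switched on is exactly $\on{deg}(i)$ under $x$, and these consume that many of the three port vertices $v_{i,n(i,1)},v_{i,n(i,2)},v_{i,n(i,3)}$; the vertices left to be matched internally are $d_i$ together with the remaining $3 - \on{deg}(i)$ ports, a total of $4 - \on{deg}(i)$ vertices. A perfect matching of these exists only when $4 - \on{deg}(i)$ is even, i.e.\ when $\on{deg}(i)$ is even---this is precisely how the gadget enforces the even-degree support of the zero-field subgraphs world. A short enumeration then gives the internal weight in each admissible case: when $\on{deg}(i) = 2$ only the single edge joining $d_i$ to the one unused port remains, of weight $1$, contributing total weight $1$; when $\on{deg}(i) = 0$ the complete graph $K_4$ on $A_i$ has three perfect matchings, each pairing $d_i$ with a port (weight $1$) and the other two ports together (weight $1/3$), for a total of $3 \cdot \tfrac13 = 1$.

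Summing $w_{\on{permat}}(x')$ over all $x'$ with $x'(\phi(E)) = x(E)$ now factors as the product of the connector-edge weights, namely $\prod_{e:x(e)=1}\lambda(e)$, times the product over gadgets of their internal weight sums. By the previous step each gadget contributes $1$ when all degrees are even, while the whole product collapses to $0$ as soon as some $\on{deg}(i)$ is odd. Since $\mu(i) \equiv 0$ forces $w_{\on{subs}}(x) = \prod_{e:x(e)=1}\lambda(e)$ when every degree is even and $w_{\on{subs}}(x) = 0$ otherwise, both sides of~(\ref{EQN:weights}) agree with $C = 1$, and Theorem~\ref{THM:weights} finishes the proof.

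I expect the main obstacle to be the single-gadget case analysis, and in particular checking that the $1/3$ weight on the $v_{ij}$--$v_{ik}$ edges is exactly calibrated so that the degree-$0$ contribution (three matchings) equals the degree-$2$ contribution (one matching). This normalisation is what makes the gadget weight-preserving irrespective of how many ports are used, and getting both parities to come out to the same constant $1$ is the crux of the reduction; the independence-across-gadgets claim, though routine, must also be stated carefully, since it is what allows the global sum to factorise into a clean product.
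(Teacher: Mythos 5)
Your proposal is correct and follows essentially the same route as the paper: both apply Theorem~\ref{THM:weights} with $C=1$, factor the sum over completions into independent per-gadget sums, and verify by the identical case analysis on $\on{deg}(i)$ that each gadget contributes $1$ when the degree is even (three matchings of weight $1/3$ for degree $0$, one matching of weight $1$ for degree $2$) and $0$ when it is odd, consistent with $w_{\on{subs}}(x)=0$ there since $\mu \equiv 0$. Your explicit parity observation (a perfect matching on $4-\on{deg}(i)$ vertices requires $\on{deg}(i)$ even) is exactly the paper's argument for the impossible degree-$1$ and degree-$3$ cases.
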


\begin{proof}  As before, 
let $\phi(\{i,j\}) = \{v_{ij},v_{ji}\}.$
Fix a configuration $x$ in the subgraphs world, and consider the number
of $x'$ such that $x'(\phi(E)) = x(E)$.  The edges that are free in
such an $x'$ are of the form $\{d_i,v_{ij}\}$, 
$\{v_{ij},v_{ik}\}$, or $\{v_{ij},v_{ji}\}$.  
Give weight $1$ to all edges of the
form $\{d_i,v_{ij}\},$ weight $1/3$ to all edges of the form
$\{v_{ij},v_{ik}\},$ and weight $\lambda(\{i,j\})$ to edges
$\{v_{ij},v_{ji}\}$.  

Then for $x$,
the choice of $x$ is a choice for how to fill out $x(E_i)$ for 
each $i$.  For a collection of edges $F$ and a configuration $x'$, let
$p(x'(F))$ be the product of the edge weights over edges with value 1 in $x'$,
that is:
\begin{equation}
p(x'(F)) = \prod_{e \in F:x'(e) = 1} \lambda'(e).
\end{equation}
Let $\match$ be the set of configurations that correspond to a matching,
so $x' \in \match$ says that if 
$e_1$ and $e_2$ are edges that share an endpoint,
then either $x'(e_1)$ or $x'(e_2)$ is 0.
Using this notation
\begin{equation*}
w_{\on{permat}}(x') = p(x'(\phi(E))) \prod_i p(x'(E_i)) \ind(x' \in \match).
\end{equation*}
So
$$\sum_{x':x'(\phi(E)) = x(E)} w_{\on{permat}}(x') 
  = \sum_{x':x'(\phi(E)) = x(E)} p(x'(\phi(E))) 
    \prod_i p(x'(E_i)) \ind(x' \in \match).$$

Since there is no magnetic field, $p(x'(\phi(E))) = w_{\on{subs}}(x)$, and
factoring the sum of the products yields:
$$\sum_{x':x'(\phi(E)) = x(E)} w_{\on{permat}}(x') 
 = w_{\on{subs}}(x) \prod_{i} \sum_{x'(E_i)} p(x'(E_i))
  \ind(x' \in \match).$$

To satisfy Theorem~\ref{THM:weights}, it suffices that 
$\sum_{x'(E_i)} p(x'(E_i)) \ind(x' \in \match) = 1$ 
for all $i$.  There are several cases to consider 
based on the degree of $i$ under $x$.

Suppose first the degree of $i$ under $x$ is 0.  
There are three different $x'(E_i)$ with nonzero weight.
The three different $x'$ come from the fact that $d_i$ 
is matched to one of three different nodes.
Say $x'(\{d_i,v_{ij}\}) = 1$.  Then to match $k$ and $\ell$,
$x'(\{v_{ik},v_{i\ell}\}) = 1$, and all the rest of the edges $e \in E_i$
have $x'(e) = 0$.  Since $\lambda'(\{d_i,v_{ij}) = 1$ and 
$\lambda'(\{v_{ik},v_{i\ell}\}) = 1/3$, $p(x') = 1/3.$

Similarly, $d_i$ could be matched to $v_{ik}$ or $v_{i\ell}$, again
resulting in $p(x'(E_i)) = 1/3$.  Hence
$\sum_{x'(E_i)} p(x'(E_i)) \ind(x' \in \match) = 1/3 + 1/3 + 1/3 = 1.$

Now suppose degree of $i$ under $x$ is 1.
Then $x'(E_i)$ needs to be a perfect matching on $4 - 1 = 3$ 
nodes, which is
impossible.  Similarly, if the degree of $i$ under $x$ is 3, 
$x'(E_i)$ needs to be a perfect matching on 1 node:  also impossible.
Hence in these cases the sum of the weights is 0.  Fortunately,
since there is no magnetic field, $w_{\on{subs}}(x) = 0$ is these
cases also.

Last, suppose that the degree of $i$ in $x$ is $2$.  Then $d_i$ must
be matched in $x'(E_i)$ to whatever node adjacent to $i$ is not 
already matching.  The edge weight of this edge is 1, so 
$\sum_{x'(E_i)} p(x'(E_i)) \ind(x' \in \match) = 1,$ which completes the 
proof.
\end{proof}

Note, a similar gadget appears in~\cite{montroll1964} (pp. 125--147) in
the specific case of the Ising model on two-dimensional lattices.

\paragraph{Reduction to matchings for nonzero magnetic field}
When the magnetic field is positive for every node, the subgraphs world
can be reduced to sampling matchings rather than perfect matchings.
As noted earlier, when $\mu(i) < \#V^{-1}$, changing $\mu(i)$ to 
$\#V^{-1}$
results in a distribution that when sampled from, yields a draw from
the distribution for the original $\mu(i)$ with probability at least
$\exp(-1)$.
Hence any subgraphs model can be altered to have $\mu(i) \geq \#V^{-1}$
for all $i$.

The $G'$ constructed from $G$ remains the same as in the no magnetic
field case, all that changes is the construction of $\lambda'$ for the
matchings.

All edges of the form 
$\{v_{ia},v_{ib}\}$ 
receive the same edge weight $\lambda_1$.
Edges of the form 
$\{d_i,v_{ia}\}$ 
receive
edge weight $\lambda_2$.  But this does not leave enough freedom to 
handle configurations where the degree is either $0$, $1$, $2$, or $3$,
which gives rise to a system of four equations via (\ref{EQN:verify}).
One more parameter is the constant $C$ in the equations, but that still
only gives three unknowns and four equations.

So another parameter $\alpha(i)$ must be added 
to each node.  The new edge weights will be 
$\lambda'(\{v_{ij},v_{ji}\}) = \alpha(i) \alpha(j) \lambda(\{i,j\}),$
and now there are four unknowns.    
The solution to the four equations is presented in the following lemma. 

\begin{lem}
\label{LEM:ising2matchings}
Given graph $G = (V,E)$, with maximum degree 3, build
$G' = (V',E')$ as in Lemma~\ref{LEM:ising2perfect}.
let $G' = (V',E')$ be the same as for the zero
magnetic field case.  
For $i$ of degree 3 in $G$, let $\lambda_2(i)$ be the smallest nonnegative
solution to the 
cubic equation
\begin{equation*}
\mu(i)^2(1 + \lambda_2(i))^3 = 1 + 3 \lambda_2(i) + 3 \lambda_2(i)^2.
\end{equation*}
In fact, there always exists a solution with 
$0 \leq \lambda_2(i) \leq 3 \mu(i)^{-2}.$  Let
\begin{equation*}
\lambda_1(i) = \lambda_2(i)^2(1 + \lambda_2(i))^{-1},\ \ 
  \alpha(i) = \mu(i)(1 + \lambda_2(i)).
\end{equation*}
Then for each edge $e$ of the form $\{v_{ij},v_{ik}\},$ set
$\lambda'(e) = \lambda_1(i)$, while for edges $e$ of the form
$\{d_i,v_{ij}\}$, set $\lambda'(e) = \lambda_2(i).$ 
For edges $e = \{i,j\}$ with $\phi(e) = \{v_{ij},v_{ji}\}$, set
$\lambda'(\phi(e)) = \lambda(e) \alpha(i) \alpha(j).$

Let $X' \sim \pi_{\on{mat}}$, and set $X(E) = X'(\phi(E))$.
Then $X \sim \pi_{\on{subs}}.$
\end{lem}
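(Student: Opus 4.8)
The plan is to verify the single hypothesis of Theorem~\ref{THM:weights}: with $w(\cdot) = w_{\on{mat}}(\cdot)$ on $G'$ and $w_{\on{subs}}(\cdot)$ on $G$, it suffices to exhibit a constant $C$, independent of $x$, for which equation~(\ref{EQN:weights}) holds, namely
\[
\sum_{x':x'(\phi(E)) = x(E)} w_{\on{mat}}(x') = w_{\on{subs}}(x)\, C
\qquad\text{for every } x \in \{0,1\}^E.
\]
By the reductions of Section~\ref{SEC:reductions} I may assume every node of $G$ has degree exactly $3$, so each gadget $A_i = \{d_i,v_{i,n(i,1)},v_{i,n(i,2)},v_{i,n(i,3)}\}$ carries the complete graph $E_i$ on four vertices, exactly as in Lemma~\ref{LEM:ising2perfect}. (Lower-degree nodes, if retained, are handled by the identical enumeration with fewer $v$-nodes.)

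The first real step is a factorization across nodes, parallel to the proof of Lemma~\ref{LEM:ising2perfect}. Fixing $x$ fixes every bridge edge $\{v_{ij},v_{ji}\}$, so the only free coordinates of $x'$ are the internal edges $\bigcup_i E_i$. A $v$-node $v_{ij}$ is \emph{occupied} precisely when $x(\{i,j\}) = 1$; since distinct bridge edges never share a vertex, the indicator $\ind(x' \in \match)$ decouples into a product over $i$ of local indicators that the chosen edges of $E_i$ form a matching avoiding the occupied $v$-nodes. Writing the bridge weight $\lambda(\{i,j\})\alpha(i)\alpha(j)$ and regrouping the $\alpha$ factors by endpoint, each node $i$ contributes $\alpha(i)^{\on{deg}(i)}$ where $\on{deg}(i)$ is the degree of $i$ under $x$, while the product of the $\lambda(\{i,j\})$ over on-edges reproduces the $\lambda$-part of $w_{\on{subs}}(x)$. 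Since $\mu(i)\equiv 0$ plays no role here (we are in the positive-field case and the $\mu$ weights sit only in $w_{\on{subs}}$), the whole sum factors, and the target identity reduces to a \emph{per-node} statement: for each $i$ and each $d = \on{deg}(i) \in \{0,1,2,3\}$,
\[
g_i(d) := \alpha(i)^{d}\,M_i(d) = C_i\,\mu(i)^{\,[\,d \text{ odd}\,]},
\]
where $M_i(d)$ is the matching polynomial of the sub-$K_4$ obtained by deleting the $d$ occupied $v$-nodes, and $C_i$ is a node-constant; the global constant is then $C = \prod_i C_i$.

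The second step is the explicit computation of the four polynomials. Enumerating matchings of the relevant sub-$K_4$ (with internal weights $\lambda_2(i)$ on the edges at $d_i$ and $\lambda_1(i)$ on the edges among the $v$-nodes) gives
\[
M_i(0) = 1 + 3\lambda_2 + 3\lambda_1 + 3\lambda_1\lambda_2,\quad
M_i(1) = 1 + 2\lambda_2 + \lambda_1,\quad
M_i(2) = 1 + \lambda_2,\quad
M_i(3) = 1,
\]
writing $\lambda_1 = \lambda_1(i)$, $\lambda_2 = \lambda_2(i)$. Substituting $\lambda_1 = \lambda_2^2(1+\lambda_2)^{-1}$ collapses these: $M_i(0) = 1 + 3\lambda_2 + 3\lambda_2^2$ and $M_i(1) = (1+3\lambda_2+3\lambda_2^2)(1+\lambda_2)^{-1}$. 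Inserting $\alpha(i) = \mu(i)(1+\lambda_2)$ and invoking the cubic $\mu(i)^2(1+\lambda_2)^3 = 1 + 3\lambda_2 + 3\lambda_2^2$ then yields $g_i(0) = \mu(i)^2(1+\lambda_2)^3$, $g_i(1) = \mu(i)^3(1+\lambda_2)^3$, $g_i(2) = \mu(i)^2(1+\lambda_2)^3$, and $g_i(3) = \mu(i)^3(1+\lambda_2)^3$. All four are consistent with $C_i = \mu(i)^2(1+\lambda_2(i))^3$ and the parity rule above, establishing the per-node identity.

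It remains to confirm that the recipe is well defined and produces legitimate nonnegative weights. Setting $h(\lambda_2) = \mu(i)^2(1+\lambda_2)^3 - (1+3\lambda_2+3\lambda_2^2)$, I will check $h(0) = \mu(i)^2 - 1 \le 0$ and, after expanding at $\lambda_2 = 3\mu(i)^{-2}$ using $\mu(i)^2\lambda_2 = 3$, that $h(3\mu(i)^{-2}) = \mu(i)^2 + 8 + 6\lambda_2 > 0$; the intermediate value theorem then supplies a smallest root $\lambda_2(i) \in [0,3\mu(i)^{-2}]$, and the resulting $\lambda_1, \alpha$ are manifestly nonnegative, so $\pi_{\on{mat}}$ on $G'$ is genuinely defined. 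Theorem~\ref{THM:weights} then gives $X \sim \pi_{\on{subs}}$.

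I expect the main obstacle to be conceptual rather than computational: it is the observation that three free parameters per node ($\lambda_1,\lambda_2,\alpha$) must satisfy four degree-cases, leaving the node-constant $C_i$ as an implicit fourth unknown, and that this over-determined-looking system is consistent \emph{exactly} when $\lambda_2(i)$ solves the cubic. Seeing that the substitutions for $\lambda_1$ and $\alpha$ make every $M_i(d)$ a multiple of the cubic's right-hand side is the crux; once that is in hand, the matching enumerations and the existence of a root in $[0,3\mu(i)^{-2}]$ are routine.
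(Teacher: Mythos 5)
Your proposal is correct and takes essentially the same route as the paper's proof: both verify the hypothesis of Theorem~\ref{THM:weights} by factoring the matching sum over the disjoint per-node gadgets, enumerating the four local matching sums $1+3\lambda_1+3\lambda_2+3\lambda_1\lambda_2$, $1+\lambda_1+2\lambda_2$, $1+\lambda_2$, $1$, checking that all four degree cases are consistent with the common value $\mu(i)^2(1+\lambda_2)^3 = 1+3\lambda_2+3\lambda_2^2$ via the cubic, and invoking the intermediate value theorem on $[0,3\mu(i)^{-2}]$ for existence of the root. Your repackaging through the matching polynomials $M_i(d)$ and per-node constants $C_i$ is only cosmetic (and your endpoint value $\mu(i)^2+8+6\lambda_2$ agrees with the paper's $\mu(i)^2+8+18\mu(i)^{-2}$).
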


\begin{proof}
Fix $x \in \{0,1\}^E$.  As before, for $F \subseteq E'$ let
$p(x'(F)) = \prod_{e \in F:x'(e) = 1}\lambda'(e),$ so
\begin{equation*}
\sum_{x':x'(\phi(E)) = x(E)} w_{\on{permat}}(x') 
  = \sum_{x':x'(\phi(E)) = x(E)} p(x'(\phi(E))) \prod_i p(x'(E_i))
   \ind(x' \in \match).
\end{equation*}
Let $d(i,x)$ denote the number of edges adjacent to $i$ that have value
1 in configuration $x$.   
Since each edge $\{v_{ij},v_{ji}\}$ receives an extra factor of 
$\alpha(i)$ and $\alpha(j)$ in its edge weight,
\begin{eqnarray*}
p(x'(\phi(E))) & = & w_\on{subs}(x) \left[ \prod_i \alpha(i)^{d(i,x)}
           \right] \left[
         \prod_{i:d(i,x) \textrm{ is odd}} \mu(i)^{-1} 
         \right] \\
& = & w_{\on{subs}}(x) \prod_i f(i),
\end{eqnarray*}
where 
\[
f(i) := \alpha(i) ^{d(i,x)}
 [\ind(d(i,x) \textrm{ is even}) + 
 \mu(i)^{-1} \ind(d(i,x) \textrm{ is odd})].
\]
Then
\[
\sum_{x':x'(\phi(E)) = x(E)} w_{\on{permat}}(x') 
 = w_{\on{subs}}(x) \prod_i 
 f(i)
  \sum_{x'(E_i)}
  p(x'(E_i)) \ind(x' \in \match).
\]

In order to prove the theorem, it suffices to have each term in the 
product in the right hand side equal a constant for each $i$.  Call
this constant $C(i)$.  Fix $i$.
There are four possible
values for $d(i,x)$:  0, 1, 2, and 3.  Each gives rise to an equation.
In the equations, $\lambda_1$ is the weight for edges between nodes
in $\{v_{ij},v_{ik},v_{i\ell}\}$, and $\lambda_2$
is the weight given to all edges leaving $d_i$.
(For simplicity, 
the dependence of $\lambda_1,\lambda_2,C$ and $\alpha$ on $i$ is
suppressed.)
\begin{eqnarray*}
d(i,x) = 0: \ C & = & [1 + 3\lambda_1 + 3 \lambda_2 
                      + 3 \lambda_1 \lambda_2]\\
d(i,x) = 1: \ C & = & \alpha \mu(i)^{-1} [1 + \lambda_1 
                                               + 2 \lambda_2]  \\
d(i,x) = 2: \ C & = & \alpha^2 [1 + \lambda_2]                          \\
d(i,x) = 3: \ C & = & \alpha^3 \mu(i)^{-1}[1]                              \\
\end{eqnarray*}
The expression in brackets on the right hand is the sum of the weights of 
$x'(E_i)$ with nonzero weight.  When $d(i,x) = 3$, it must be that
$x'(e) = 0$ for all $e \in E_i$, so $p(x'(E_i)) = 1$.

When $d(i,x) = 2$, there are two possible matchings, either 
$d_i$ is matched to the remaining node (giving weight $\lambda_2$) or
not (giving weight $1$).

When $d(i,x) = 1$, one of $\{v_{ij},v_{ik},v_{i\ell}\}$ is matched:
say without loss of generality $v_{i\ell}$ is taken.  Then the remaining
nodes of $E_i$ are $\{d_i,v_{ij},v_{ik}\}$ and they are all connected
to each other.  Hence the possible matchings are:  empty matching (weight 1),
matching $v_{ij}$ to $v_{ik}$ (weight $\lambda_1$),
matching $d_i$ to $v_{ij}$ (weight $\lambda_2$) and matching
$d_i$ to $v_{ik}$ (again weight $\lambda_2$).  Hence the sum of the weights
of the matchings is $1 + \lambda_1 + 2 \lambda_2$.  

When $d(i,x) = 0$, any matching in $E_i$ contributes to the sum.
There is one matching of size 0 (weight 1), six matchings of size 1
(three weight $\lambda_1$, three weight $\lambda_2$), and three matchings
of size 2 (all weigh $\lambda_1 \lambda_2$).  Hence the total sum
of weights is 
$1 + 3 \lambda_1 + 3 \lambda_2 + 3 \lambda_1 \lambda_2.$

Now the solution must be checked.  First show existence.  Let
$$g(x) := \mu(i)^2(1 + x)^3 - (1 + 3x + 3x^2).$$  
Note that 
$$g(3\mu(i)^{-2}) = \mu(i)^2 + 8 + 18 \mu(i)^{-2} > 0,$$
but 
$g(0) = \mu(i)^2 - 1 \leq 0$
Since $g$ is continuous, there must be a solution to $g(x) = \mu(i)^2$
with $x \in [0,3 \mu(i)^{-2}].$

Since $\alpha = \mu(i)(1 + \lambda_2),$ the $d(i,x) = 3$ equation has
$$C = \mu(i)^3(1 + \lambda_2)^3 \mu(i)^{-1} = 1 + 3\lambda_2 + 3\lambda_2^2.$$
Similarly, the $d(i,x) = 2$ equation has
$$C = \mu(i)^2(1 + \lambda_2)^3 = 1 + 3 \lambda_2 + 3\lambda_2^2.$$
For the $d(i,x) = 1$ equation, using $\lambda_1 = \lambda_2^2 / (1 + \lambda_2)$
yields:
$$C = (1 + \lambda_2)(1 + \frac{\lambda_2^2}{1 + \lambda_2} + 2 \lambda_2)
 = 1 + 3 \lambda_2 + 3 \lambda_2^2.$$

Finally, in the $d(i,x) = 0$ equation, 
$$C = 1 + 3 \lambda_1 + 3\lambda_2 + 3\lambda_1\lambda_2 = 
 1 + \frac{3\lambda_2^2}{1 + \lambda_2} + 3\lambda_2 + 
     \frac{3\lambda_2^3}{1 + \lambda_2} 
 = 1 + 3 \lambda_2 + 3 \lambda_2^2.$$

Hence this is a valid solution to all four equations, and by 
Theorem~\ref{THM:weights}, the result follows.

\end{proof}

\section{Reduction of perfect matchings in 
              unbalanced graphs to balanced graphs}
\label{SEC:unbalanced2balanced}

Consider finding the permanent of a bipartite graph
$G = (V_1 \sqcup V_2,E)$
so $(\forall \{i,j\} \in E)(\#(\{i,j\}\cap V_1) = \#(\{i,j\} \cap V_2) = 1).$
Call the graph {\em unbalanced} if $\#V_1 < \#V_2,$ and call a 
configuration $x \in \{0,1\}^E$ a {\em perfect matching in an unbalanced
bipartite graph} if for all $i \in V_1$, there exists a $j \in V_2$ such that 
$\{i,j\} \in E$ and $x(\{i,j\}) = 1.$

Smith studied the number of perfect matchings in unbalanced 
bipartite graphs
to bound the performance of digital mobile radio systems~\cite{smith1998}.
The number of perfect matchings is equal to 
the permanent of the rectangular adjacency matrix (see~\cite{smithd2001}).

The permanent of square matrices has attracted far more study than
the rectangular case, and so
the goal here is to reduce the problem of generating variates from
unbalanced graphs to generating variates from balanced graphs.

Fortunately, the reduction is easy to describe.  Let $V_3$ consist of 
$\#V_2 - \#V_1$ new nodes, and add edges from every node in $V_3$ to
every node in $V_2$, and give them weight 1.  
Then $G$ is still bipartite with node partition 
$(V_1 \sqcup V_3,V_2)$.

\begin{figure}[ht]
\begin{center}
  \begin{picture}(400,150)(-30,0)
    \thicklines
    \put(10,80){\oval(20,40)}
    \put(3,80){$V_1$}
    \put(110,50){\oval(20,100)}
    \put(108,50){$V_2$}

    \put(17,90){\line(6,-1){90}}
    \put(17,80){\line(6,0){90}}
    \put(17,90){\line(6,-1){90}}
    \put(17,65){\line(2,-1){90}}

    \put(160,50){$\Rightarrow$}

    \put(210,80){\oval(20,40)}
    \put(203,80){$V_1$}
    \put(210,27){\oval(20,55)}
    \put(203,27){$V_3$}
    \put(310,50){\oval(20,100)}
    \put(308,50){$V_2$}

    \put(217,90){\line(6,-1){90}}
    \put(217,80){\line(6,0){90}}
    \put(217,90){\line(6,-1){90}}
    \put(217,65){\line(2,-1){90}}

    \put(217,10){\line(1,0){90}}
    \put(217,10){\line(2,1){90}}
    \put(217,25){\line(1,0){90}}
    \put(217,25){\line(2,1){90}}
    \put(217,40){\line(1,0){90}}
    \put(217,40){\line(2,1){90}}

  \end{picture}
\end{center}
\caption{Unbalanced to balanced bipartite graph}
\end{figure}
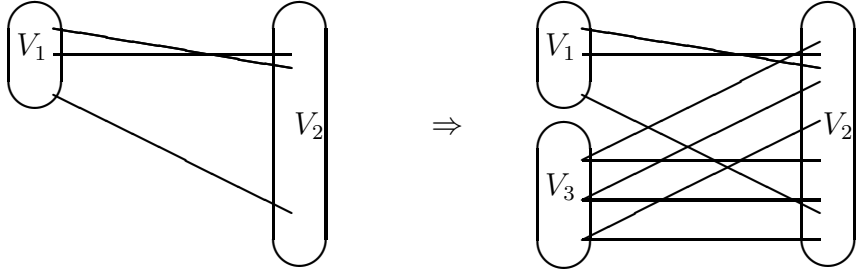

Consider a configuration $x$ that was a perfect matching in the original
graph.  Then if $x'$ is a configuration on the new graph with
$x'(E) = x(E)$, then exactly $\#V_1$ nodes in $V_2$ are already matched.
So to fill out $x'$, the remaining nodes in $V_2$ must be matched to 
$V_3$, and there are exactly $(\#V_2 - \#V_1)!$ ways to accomplish this.
Each of these has weight equal to the weight of $x(E)$, and so 
$\sum_{x':x'(E) = x(E)} w'(x') = w(x) (\#V_2 - \#V_1)!$.  
Theorem~\ref{THM:weights} then says that drawing $x'$ from perfect
matchings on the new graph and keeping $x(E) = x'(E)$ results in
a draw from the perfect matchings on the original unbalanced graph.

Hence any algorithm for simulation and approximation of the partition
function for perfect matchings on balanced graphs 
(such as~\cite{jerrumsv2004},\cite{bezakovasvv2006}) can also be
used for unbalanced problems.

\section{Reducing matchings to perfect matchings in bipartite graphs}
\label{SEC:matchings2perfectmatchings}

Consider a bipartite graph $G = (V_1 \sqcup V_2,E)$.  The problem of 
sampling from all matchings
in this graph can be reduced to sampling perfect matchings as follows.
First, for each node $i \in V_1$, create a node $i'$ and add edge $\{i,i'\}$
with edge weight 1.
If $V_3$ is the set of $i'$, this creates a new, unbalanced bipartite graph
$G' = (V_1 \sqcup (V_2 \cup V_3),E)$.  Furthermore, each matching in the
original graph corresponds to a perfect matching in the new graph with
equal weight.  So a perfect matching sampled from the new graph yields
a matching in the original.

This can be reduced to a balanced
bipartite graph as described in the previous section:  
create $\#V_2$ new nodes and connect each of them to all of the nodes in 
$V_2 \cup V_3$.  The final graph is still bipartite but now is balanced.

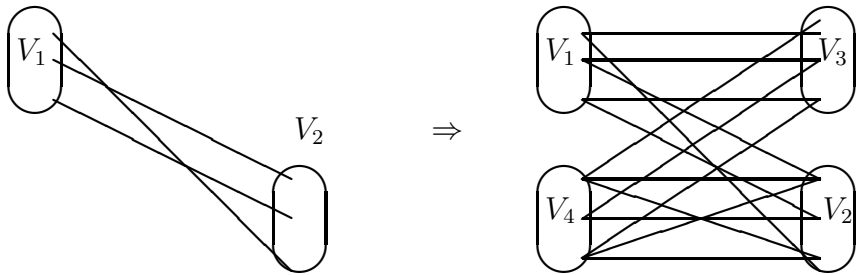
\begin{figure}[ht]
\begin{center}
  \begin{picture}(400,150)(-30,0)
    \thicklines
    \put(10,80){\oval(20,40)}
    \put(3,80){$V_1$}
    \put(110,20){\oval(20,40)}
    \put(108,50){$V_2$}

    \put(17,90){\line(1,-1){90}}
    \put(17,80){\line(2,-1){90}}
    \put(17,65){\line(2,-1){90}}

    \put(160,50){$\Rightarrow$}

    \put(210,80){\oval(20,40)}
    \put(203,80){$V_1$}
    \put(310,20){\oval(20,40)}
    \put(308,20){$V_2$}
    \put(210,20){\oval(20,40)}
    \put(203,20){$V_4$}
    \put(310,80){\oval(20,40)}
    \put(306,80){$V_3$}

    \put(217,90){\line(1,-1){90}}
    \put(217,80){\line(2,-1){90}}
    \put(217,65){\line(2,-1){90}}

    \put(217,90){\line(1,0){90}}
    \put(217,80){\line(1,0){90}}
    \put(217,65){\line(1,0){90}}
    \put(217,05){\line(1,0){90}}
    \put(217,05){\line(3,1){90}}
    \put(217,05){\line(3,2){90}}
    \put(217,20){\line(1,0){90}}
    \put(217,20){\line(3,2){90}}
    \put(217,35){\line(1,0){90}}
    \put(217,35){\line(3,-1){90}}
    \put(217,35){\line(3,2){90}}

  \end{picture}
\end{center}
\caption{Matchings to perfect matchings}
\end{figure}

The point of this is that the new problem is a perfect matchings 
problem on bipartite graphs, and so samples can be generated in 
polynomial time (\cite{jerrums1993,bezakovasvv2006}) for any values of
the edge weights.  With the matching canonical paths approach, the time
to generate a sample depends on the square of the largest edge weight,
with this reduction, this dependence no longer appears.

\section{Consequences of the Ising reduction}
\label{SEC:consequences}

The purpose of any reduction is so that existing methods for one problem
can be immediately applied to the other.  For instance, it was noted
in equation (\ref{EQN:mixtimeIsing}) that the mixing time for the Ising
model on graph $G = (V,E)$ was upper bounded by
$2(\# E)^2(\max_i \mu(i)^{-4})[ (\ln 2)\#E + \ln \epsilon^{-1}]$
using the canonical paths method.

Suppose instead that the Ising model is first reduced to a graph with
maximum degree 3 with at most $3\#E$ edges and $2\#E$ nodes.  
Then the graph
is further altered so that a draw from the matchings distribution yields
a draw from the subgraphs distribution.  The mixing time for the 
matchings distribution on this graph is
$4(3 \#E) (2 \#E)\lambda'^2[(\ln 2)\#E + \ln \epsilon^{-1}],$ 
and from the construction
in Section~\ref{SEC:ising2matchings} $\lambda' \leq 3 \max_i \mu(i)^{-2}.$
So the total mixing time is the same order in $\#E$ as for direct analysis
on the subgraph world, but the constant 
in front is larger by a factor of 108.

The purpose of the reduction is so that any improvements or new algorithms 
for simulating matchings
will automatically translate into new algorithms for the subgraphs world.

As an example of this, Bayati et. al. \cite{bayatigknt2007} have shown 
that for 
matchings in graphs of bounded degree and bounded edge weights how
it is possible to construct a deterministic 
fully polynomial time approximation scheme for computing the partition
function for the set of matchings.  To be precise, their result states:

\begin{thm}
For a graph $G = (V,E)$ with edge weights $\lambda$, and 
$\epsilon > 0$, there exists an $\exp(\epsilon)$ approximation algorithm
for $Z = \sum_{x \in \{0,1\}^E:x \textrm{ a matching}} 
         \prod_{e:x(e) = 1} \lambda(e)$
that runs in time $O(n / \epsilon)^{\kappa \log \Delta + 1}$, where
$\Delta$ is the maximum degree of the graph, $\lambda = \max_e \lambda(e)$ 
and $\kappa = -2 / \log(1 - 2 / [(1 + \lambda \Delta)^{1/2} + 1]).$
\end{thm}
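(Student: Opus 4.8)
The plan is to prove this by the correlation-decay (computation-tree) method rather than by any Markov chain, since the claimed bound is deterministic. The first step is to reduce computing $Z$ to computing occupation ratios at single vertices. Writing $Z(G)$ for the matching partition function and $Z(G\setminus v)$ for the graph with $v$ and its incident edges deleted, conditioning on whether $v$ is matched gives the exact identity
\begin{equation*}
Z(G) = Z(G\setminus v) + \sum_{u \sim v} \lambda(\{u,v\})\, Z(G\setminus\{u,v\}),
\end{equation*}
so the ratio $R_v := Z(G\setminus v)/Z(G)$ obeys
\begin{equation*}
R_v = \frac{1}{\,1 + \sum_{u \sim v} \lambda(\{u,v\})\, R_u^{(G\setminus v)}\,},
\end{equation*}
where $R_u^{(G\setminus v)}$ is the analogous ratio for $u$ inside $G\setminus v$. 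Ordering the vertices $v_1,\dots,v_n$ and telescoping, $Z(G) = \prod_{k=1}^{n} (R_{v_k})^{-1}$ evaluated in the successively shrinking graphs, the empty graph contributing $1$. Each $R_v$ lies in $[\,(1+\lambda\Delta)^{-1},1\,]$, so an additive $O(\epsilon/n)$ error in each $\log R_{v_k}$ yields an $\exp(\epsilon)$-approximation to $Z$; this telescoping over $n$ vertices is exactly the source of the ``$+1$'' in the exponent of the running time.

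The second step is to make the recursion exact on a tree. Unrolling the recursion above generates the self-avoiding-walk (path) tree rooted at $v$, and by Godsil's path-tree identity for the matching polynomial the value $R_v$ computed on this tree equals the true ratio on $G$. On that tree the root has at most $\Delta$ children and each internal vertex at most $\Delta-1$, so the tree truncated to depth $L$ has at most $O(\Delta^{L})$ nodes, and $R_v$ can be evaluated bottom-up in time linear in the truncated size once boundary values are fixed at the leaves.

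The heart of the argument, and the step I expect to be the main obstacle, is the contraction lemma: I must show that the map $R \mapsto (1+\sum_i \lambda_i R_i)^{-1}$ is a strict contraction with a rate producing exactly the stated $\kappa$. The crude bound $|\partial R_v/\partial R_u| = \lambda(\{u,v\}) R_v^{2}$ is not itself small enough, so the plan is to introduce a potential function $\Phi$ and control the amortized factor $\sum_u |\partial R_v/\partial R_u|\,\Phi'(R_v)/\Phi'(R_u)$, measuring distances in the $\Phi$-transformed coordinate. The right choice of $\Phi$ should force the extremal configuration to be the symmetric tree with all weights $\lambda$ and all degrees $\Delta$, collapsing the optimization to a single variable whose extremal value is $c := 1 - 2/(\sqrt{1+\lambda\Delta}+1) = (s-1)/(s+1)$ with $s=\sqrt{1+\lambda\Delta}$; the square root is precisely where $\sqrt{1+\lambda\Delta}$ in $\kappa$ is born. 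Verifying that one fixed $\Phi$ simultaneously dominates every admissible degree/weight configuration, and that the symmetric case is genuinely the extremum, is the delicate part.

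Given the contraction, the remainder is routine bookkeeping. Fixing arbitrary boundary values in $[0,1]$ at depth $L$ introduces an $O(1)$ error there, which the contraction damps to $O(c^{\,\Theta(L)})$ at the root. Choosing $L = \Theta(\kappa\log(n/\epsilon))$ makes this at most $\epsilon/n$; indeed $c^{\,\kappa\log(n/\epsilon)} = (\epsilon/n)^{2}$, and the factor $2$ built into $\kappa = -2/\log c$ supplies the slack needed to absorb both the $O(1)$ boundary error and the conversion from additive error on $R_v$ to additive error on $\log R_v$ (legitimate because $R_v$ is bounded away from $0$). Each ratio is then computed in time $\Delta^{L} = (n/\epsilon)^{\kappa\log\Delta}$, and multiplying by the $n$ telescoping factors gives the claimed $O(n/\epsilon)^{\kappa\log\Delta+1}$ running time, completing the proof.
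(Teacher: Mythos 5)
The first thing to note is that the paper you were asked to match contains no proof of this statement at all: the theorem is quoted verbatim from Bayati, Gamarnik, Katz and Tetali~\cite{bayatigknt2007}, and the paper uses it as a black box to get its fpras for the Ising partition function. So the relevant comparison is with the cited correlation-decay argument, whose scaffolding you have essentially reconstructed correctly: the exact identity $Z(G) = Z(G\setminus v) + \sum_{u\sim v}\lambda(\{u,v\})Z(G\setminus\{u,v\})$, the telescoping of $Z$ into $n$ vertex ratios $R_v \in [(1+\lambda\Delta)^{-1},1]$, the reduction to a tree recursion (your appeal to Godsil's path-tree identity is a legitimate substitute for their computation tree, which is exact simply because it unrolls the recursion on shrinking graphs), the $\Delta^L$ cost of a depth-$L$ truncation, and the conversion of per-vertex additive error $O(\epsilon/n)$ on $\log R_v$ into an $\exp(\epsilon)$ multiplicative guarantee are all sound and are what the cited proof does.

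The genuine gap is exactly where you flag it, and it cannot be waved through: the contraction lemma with rate $c = 1 - 2/(\sqrt{1+\lambda\Delta}+1)$ \emph{is} the theorem, since $\kappa$ is defined from $c$, and your proposal only announces a plan (``the right choice of $\Phi$ should force\dots'') without exhibiting or verifying a potential. The naive one-step estimate in log-coordinates gives
\begin{equation*}
\sum_{u\sim v}\left|\frac{\partial \log R_v}{\partial \log R_u}\right|
 = R_v \sum_{u\sim v} \lambda(\{u,v\}) R_u = 1 - R_v \leq 1 - \frac{1}{1+\lambda\Delta},
\end{equation*}
which would put $1+\lambda\Delta$ rather than $\sqrt{1+\lambda\Delta}$ into $\kappa$; the square root in the cited proof comes from a two-level analysis exploiting that when $1-R_v$ is near its maximum the children's factors $1-R_u$ are forced to be small, so the product of two consecutive Lipschitz factors is at most $c^2$, i.e., the error at the root after depth $L$ is of order $c^{L/2}$, not $c^L$. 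This also exposes a concrete error in your bookkeeping: you attribute the factor $2$ in $\kappa = -2/\log c$ to ``slack'' ($c^{\kappa\log(n/\epsilon)} = (\epsilon/n)^2$) available for boundary and log-conversion losses, but in the actual argument that factor is consumed entirely by the two-level decay --- $L = \kappa\log(n/\epsilon)$ is needed just to drive $c^{L/2}$ down to $\epsilon/n$, and there is no quadratic surplus. A potential-function route of the kind you sketch can indeed be made to work, but until the amortized one-step contraction (or the two-step product bound) is actually proved, the proof is incomplete at its central step.
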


For the subgraphs distribution, 
after the reduction $\Delta = 3$, and $\lambda \leq 3 \max_i \mu(i)^{-2}$.
It can be shown this makes $\kappa$ less than
$3.06 \max_i \mu(i)^{-1}$.  

Given the relationship between the partition function for subgraphs
and matchings
given by Lemma~\ref{LEM:ising2matchings}, and the well-known relationship
between the subgraphs world partition function and the Ising partition
function (see~\cite{jerrums1993}), this gives a fpras
for the partition function of the Ising model for magnitization bounded
away from 0.

\section{Appendix}

There are several ways to deal with degree 2 nodes.  The simplest is
to ``clone'' the node by replacing it with two nodes connected by an
edge of weight 1, as shown in Figure~\ref{FIG:degree2}.

\begin{figure}[ht]
\begin{center}
  \begin{picture}(400,160)(50,0)
    \put(70,70){\circle{15}}
    \put(130,70){\circle{15}}
    \put(190,70){\circle{15}}
    \put(128,68){$i$}
    \put(68,68){$j$}
    \put(188,68){$k$}
    \thicklines
    \put(78,70){\line(1,0){45}}
    \put(138,70){\line(1,0){45}}

    \put(270,70){\circle{15}}
    \put(330,40){\circle{15}}
    \put(330,100){\circle{15}}
    \put(390,70){\circle{15}}
    \put(327,97){$i'$}
    \put(327,37){$i''$}
    \put(268,68){$j$}
    \put(388,68){$k$}
    \thicklines
    \put(277,75){\line(2,1){45}}
    \put(277,65){\line(2,-1){45}}
    \put(338,97){\line(2,-1){45}}
    \put(338,43){\line(2,1){45}}
    \put(329,48){\line(0,1){45}}
    \put(331,68){\tiny 1}
    \put(327,110){\tiny $\mu_1$}
    \put(327,28){\tiny $\mu_1$}

    \put(360,90){\tiny $\lambda_2$}
    \put(360,48){\tiny $\lambda_2$}
    \put(293,90){\tiny $\lambda_1$}
    \put(293,48){\tiny $\lambda_1$}

  \end{picture}

\end{center}
\caption{New graph for degree 2 nodes}
\label{FIG:degree2}
\end{figure}
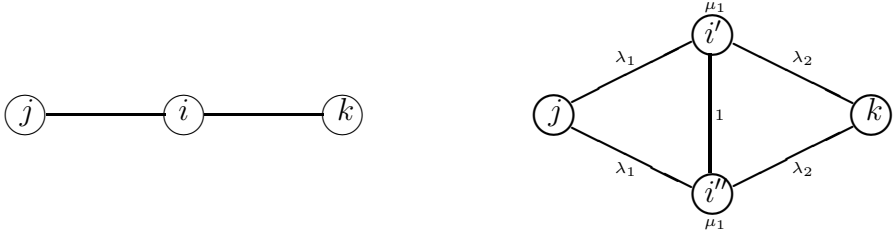

\begin{lem}
Let $G = (V,E)$ be a graph where node $i$ has degree 2.  Let $\{i,j\}$
and $\{i,k\}$ be the edges adjacent to $i$ 
and suppose the parameters for $\pi_{\on{subs}}$ 
are given by $\lambda$ and $\mu$.
Construct $G' = (V',E')$ by letting $V' = (V \setminus \{i\}) \cup \{i',i''\}$
and 
$$E' = (E \setminus \{\{i,j\},\{i,k\}\}) \cup \{ \{i',j\},\{i'',j\},
 \{i',k\},\{i'',k\},\{i',i''\}\}.$$

Set $\lambda_1$, $\lambda_2$, and $\mu_1$ in $[0,1]$ to satisfy
\begin{equation}
\label{EQN:degree2}
\frac{2\lambda_1}{1 + \lambda_1^2} = \lambda(\{i,j\}),\ 
  \frac{2\lambda_2}{1 + \lambda_2^2} = \lambda(\{i,k\}),\
  \frac{2\mu_1}{1 + \mu_1^2} = \mu(i).
\end{equation}
Let $\lambda'(\{i',j\}) = \lambda'(\{i'',j\}) = \lambda_1$,
$\lambda'(\{i',k\}) = \lambda'(\{i'',k\}) = \lambda_2$, 
$\mu'(i') = \mu'(i'') = \mu_1$, and $\lambda'(\{i',i''\}) = 1.$
For all other edges and nodes, let $\lambda'$ and $\mu'$ equal
$\lambda$ and $\mu$ respectively.

Draw $X' \sim \pi_\on{subs}$ using $(\lambda',\mu')$.  Let 
$X(\{i,j\}) = [X'(\{i',j\}) + X'(\{i'',j\})] \operatorname{mod} 2,$
and 
$X(\{i,k\}) = [X'(\{i',k\}) + X'(\{i'',k\})] \operatorname{mod} 2,$
Then $X \sim \pi_\on{subs}$ using $(\lambda,\mu).$
\end{lem}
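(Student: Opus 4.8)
The plan is to follow the template of Theorem~\ref{THM:weights}, but first I must account for the fact that the decoding here is not an edge-to-edge correspondence: $X(\{i,j\})$ and $X(\{i,k\})$ are parity sums of two edges each, while $X(e) = X'(e)$ for every other edge. Since these parity sums are deterministic functions of $X'$, the conditional decoding $\prob(X = x \mid X' = x')$ is still an indicator, so the computation in the proof of Theorem~\ref{THM:weights} applies verbatim once the summation set is replaced by the preimage of $x$: it yields $\prob(X = x) = Z'^{-1}\sum_{x' \mapsto x} w_{\on{subs}}(x')$. Thus it suffices to prove an identity of the form (\ref{EQN:weights}), namely $\sum_{x' \mapsto x} w_{\on{subs}}(x') = C\, w_{\on{subs}}(x)$ with $C$ independent of $x$, where the sum ranges over all $x'$ decoding to a fixed $x \in \{0,1\}^E$.

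The first key step is to factor out everything not local to the gadget. Because the decoding sends $X'(\{i',j\}) + X'(\{i'',j\})$ to $X(\{i,j\})$ modulo $2$ and fixes all edges of $E_0 := E \setminus \{\{i,j\},\{i,k\}\}$, the parity of $\on{deg}(j)$ and of $\on{deg}(k)$ is the same under $x'$ as under $x$. Hence every edge weight on $E_0$ and every magnetic-field factor for nodes other than $i,i',i''$ agrees between $w_{\on{subs}}(x')$ and $w_{\on{subs}}(x)$; I would collect these into a common factor $r(x)$ and cancel it. What remains is a purely local sum over the five gadget edges $\{i',j\},\{i'',j\},\{i',k\},\{i'',k\},\{i',i''\}$, constrained by the two parity equations, so there are $2^{3}=8$ surviving configurations in each of the four cases determined by $(x(\{i,j\}),x(\{i,k\})) \in \{0,1\}^2$.

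The heart of the argument is the structural observation that governs the magnetic field of $i$. Writing $m = X'(\{i',i''\})$ for the central edge, one checks that $\on{deg}(i') + \on{deg}(i'') \equiv \on{deg}(i) \pmod 2$, so the two copies have equal parity exactly when $\on{deg}(i)=x(\{i,j\})+x(\{i,k\})$ is even and opposite parity exactly when $\on{deg}(i)$ is odd. Therefore, when $i$ is odd exactly one copy is odd and contributes a single factor $\mu_1$, which is precisely the regime where the original model wants a factor $\mu(i)$. Carrying out the eight-term sum in each case and using the doubling substitutions $\lambda(\{i,j\}) = 2\lambda_1/(1+\lambda_1^2)$, $\lambda(\{i,k\}) = 2\lambda_2/(1+\lambda_2^2)$, $\mu(i) = 2\mu_1/(1+\mu_1^2)$ from (\ref{EQN:degree2}), I would show the four local sums equal $(1+\lambda_1^2)(1+\lambda_2^2)(1+\mu_1^2)$ for $\on{deg}(i)=0$, then $4\lambda_1\lambda_2(1+\mu_1^2)$ for $\on{deg}(i)=2$, and $4\lambda_1\mu_1(1+\lambda_2^2)$ and $4\lambda_2\mu_1(1+\lambda_1^2)$ for the two $\on{deg}(i)=1$ cases. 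Dividing each by the corresponding local weight of $x$ collapses all four to the single constant $C = (1+\lambda_1^2)(1+\lambda_2^2)(1+\mu_1^2)$.

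I expect the main obstacle to be not any single computation but the requirement that one constant $C$ serve all four degree cases simultaneously; this is exactly what forces the form of (\ref{EQN:degree2}). The substitution $t \mapsto 2t/(1+t^2)$ arises because each original edge is realized by two parallel copies whose values are summed modulo $2$, and pinning down $\lambda_1,\lambda_2,\mu_1$ so that the four cases agree is the only place where the proof could fail. (That such $\lambda_1,\lambda_2,\mu_1 \in [0,1]$ exist is immediate, since $t \mapsto 2t/(1+t^2)$ maps $[0,1]$ bijectively onto $[0,1]$.) Once the common value $C$ is verified, applying the deterministic-decoding form of Theorem~\ref{THM:weights} finishes the proof.
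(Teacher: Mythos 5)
Your proposal is correct and matches the paper's proof essentially step for step: both factor out the common contribution $r(x)$ from edges and nodes away from the gadget (justified by the preserved parities of $j$ and $k$), reduce to a four-case local sum over the five gadget edges in which the central edge $\{i',i''\}$ supplies $1+\mu_1^2$ when $\on{deg}(i)$ is even and $2\mu_1$ when it is odd, and verify the single constant $C=(1+\lambda_1^2)(1+\lambda_2^2)(1+\mu_1^2)$ using the substitutions $2t/(1+t^2)$, concluding via the deterministic-decoding version of the argument in Theorem~\ref{THM:weights}. Your identity $\on{deg}(i')+\on{deg}(i'')\equiv \on{deg}(i) \pmod 2$ is a clean packaging of the paper's case check, and the only cosmetic caution is to state the conclusion as the identity $\sum_{x':f(x')=x} w_{\on{subs}}(x') = C\, w_{\on{subs}}(x)$ rather than as a division by the local weight, so the cases where $\lambda_1$, $\lambda_2$, or $\mu_1$ vanish are covered without comment.
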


\begin{proof}
Note that the equations given
in (\ref{EQN:degree2}) can be easily solved 
for $\lambda_1,\lambda_2$ and $\mu_1$ using the quadratic formula.
For an equation of the form $2x / (1 + x^2) = y$, the left hand
side is 0 at $x = 0$, is 1 at $x = 1$, and is continuous.  Hence for 
$y \in [0,1]$, there is a solution $x \in [0,1].$

Now to show that the output $X$ of the procedure has the correct 
probability.  Consider that if $X(\{i,j\}) = 1$, then either 
$X'(\{i',j\}) = 1$ or $X'(\{i'',j\}) = 1$ but not both.  In either
of the two cases, a factor of $\lambda_1$ is contributed, making a 
total contribution of $2 \lambda_1$.  A similar result
holds when $X(\{i,k\}) = 1$.

When $X(\{i,j\}) = 0$, either both $\{i',j\}$ and $\{i'',j\}$ are
0 in $X'$ or both are 1 in $X'$.  The total weight contribution 
is therefore $1 + \lambda_1^2$.

Now consider what happens with $\{i',i''\}$.  In the 
case that $X(\{i,j\}) = X(\{i,k\}) = 1$, one choice of $X'(\{i',i''\})$
leads to both $i'$ and $i''$ being odd, and the other choice leads
to both $i'$ and $i''$ being even.  This makes the total weight of
these contributions $1 + \mu_1^2$.  

This situation also arises when $X(\{i,j\}) = X(\{i,k\}) = 0.$  On the
other hand, when $X(\{i,j\}) \neq X(\{i,k\})$, one choice of 
$X'(\{i',i''\})$ will make one of $\{i',i''\}$ even and the other odd,
while the other choice flips the parity of both $i'$ and $i''$.
Hence the total contribution to weight is $\mu_1 + \mu_1$.  

Let the function $f$ be the transformation that takes $x'$ and constructs
a state $x$.  That is, $f(x')(e) = x'(e)$ for 
all $e \in E \setminus \{\{i,j\},\{i,k\}\}$, 
$f(x')(\{i,j\}) = (x'(\{i',j\}) + x'(\{i'',j\}))\ \on{ mod }\ 2,$ and
$f(x')(\{i,k\}) = (x'(\{i',k\}) + x'(\{i'',k\}))\ \on{ mod }\ 2.$
In the table below, factors from edges and nodes that appear
in both $x'(E)$ and $f(x')(E)$ are the same, and so are neglected.

\begin{center}
\begin{tabular}{cccc}
$x(\{i,j\})$ & $x(\{i,k\})$ & $\sum_{x':f(x') = x} w_{\on{subs}}(x')$ 
     & $w_{\on{subs}}(f(x'))$\\
\hline 
0 & 0 & $(1 + \lambda_1^2) (1 + \lambda_2^2) (1 + \mu_1^2)$ 
 & 1 \\
0 & 1 & $(1 + \lambda_1^2) (\lambda_2 + \lambda_2) (\mu_1 + \mu_1)$ 
 & $\lambda(\{i,k\}) \mu(i)$ \\
1 & 0 & $(\lambda_1 + \lambda_1) (1 + \lambda_2^2) (\mu_1 + \mu_1)$ 
 & $\lambda(\{i,j\}) \mu(i)$ \\
1 & 1 & $(\lambda_1 + \lambda_1) (\lambda_2 + \lambda_2) (1 + \mu_1^2)$
 & $\lambda(\{i,j\}) \lambda(\{i,k\})$ \\
\end{tabular}
\end{center}

By the way $\lambda_1$, $\lambda_2$ and $\mu_1$ are defined, when
$C = (1+\lambda_1^2)(1+\lambda_2^2)(1 + \mu_1^2),$ the equation
$$\sum_{x':f(x') = x} w_{\on{subs}}(x') = C w_{\on{subs}}(x)$$
holds.  As in the proof of Theorem~\ref{THM:weights}, this implies
\[
\prob(X = x) = \sum_{x':f(x') = x} 
  \prob(X' = x') = C w_{\on{subs}}(x) / Z,
\] 
and
hence $X$ has the desired distribution.
\end{proof}
Note that after this reduction, the degree of $i'$ and $i''$ is 3,
while the degree of $j$ and $k$ is increased by 1.  After raising the
degree of the other nodes, they can be split apart as in 
Section~\ref{SEC:reductions} if needed.

\bibliographystyle{plain}

\end{document}